\newtheorem{theorem}{Theorem}[section]
\newtheorem{lemma}[theorem]{Lemma}
\newtheorem{proposition}[theorem]{Proposition}
\newtheorem{corollary}[theorem]{Corollary}
\newtheorem{definition}[theorem]{Definition}
\newtheorem{assumption}[theorem]{Assumption}
\newtheorem{remark}[theorem]{Remark}
\newtheorem{example}[theorem]{Example}
\begin{document}
\setlength\arraycolsep{2pt}
\title{Matrix factorization for multivariate time series analysis}
\author{Pierre ALQUIER*}
\address{*RIKEN Center for Advanced Intelligence Project, Tokyo, Japan}
\email{pierre.alquier.stat@gmail.com}
\author{Nicolas MARIE$^{\dag}$}
\address{$^{\dag}$Laboratoire Modal'X, Universit\'e Paris Nanterre, Nanterre, France}
\email{nmarie@parisnanterre.fr}
\address{$^{\dag}$ESME Sudria, Ivry-sur-Seine, France}
\email{nicolas.marie@esme.fr}
\keywords{}
\date{}
\maketitle
\noindent
%


%
\begin{abstract}
Matrix factorization is a powerful data analysis tool. It has been used in multivariate time series analysis, leading to the decomposition of the series in a small set of latent factors. However, little is known on the statistical performances of matrix factorization for time series. In this paper, we extend the results known for matrix estimation in the i.i.d setting to time series. Moreover, we prove that when the series exhibit some additional structure like periodicity or smoothness, it is possible to improve on the classical rates of convergence.
\end{abstract}
\noindent
\begin{small}\textbf{Acknowledgements.} The authors gratefully acknowledge Maxime Ossonce for discussions which helped to improve the paper. The first author was working at CREST, ENSAE Paris when this paper was written; he gratefully acknowledges financial support from Labex ECODEC (ANR-11-LABEX-0047).\end{small}
%


%

\section{Introduction}

Matrix factorization is a very powerful tool in statistics and data analysis. It was used as early as in the 70's in econometrics in reduced-rank regression~\cite{Izenman,Geweke,Kleibergen2}. There, matrix factorization is mainly a tool to estimate a coefficient matrix under a low-rank constraint. There was recently a renewed interest in matrix factorization as a data analysis tool for huge datasets. Nonnegative matrix factorization (NMF) was introduced by~\cite{LS1999} as a tool to represent a huge number of objects as linear combinations of elements of ``parts'' of objects. The method was indeed applied to large facial image datasets and the dictionary indeed contained typical parts of faces. Since then, various methods of matrix factorization were successfully applied such various fields as collaborative filtering and recommender systems on the Web~\cite{koren2009matrix,vernade2015learning}, document clustering~\cite{shahnaz2006document}, separation of sources in audio processing~\cite{ozerov2010multichannel}, missing data imputation~\cite{husson2018imputation}, quantum tomography~\cite{gross2011recovering,guctua2012rank,xia2016estimation,cai2016optimal,mai2017pseudo}, medical image processing \cite{AGT2014} topics extraction in texts \cite{paisley2015bayesian} or transports data analysis~\cite{carel2017non}. Very often, matrix factorization provides interpretable and accurate representations of the data matrix as the product of two much smaller matrices. The theoretical performances of matrix completion were studied in a series of papers by Cand\`es with many co-authors~\cite{candes2009exact,candes2010power,candes2010matrix}. Minimax rates for matrix completion and more general matrix estimation problems were derived in~\cite{koltchinskii2011nuclear,cai2015rop,klopp2017robust,klopp2017structured,moridomi2018tighter}. Bayesian estimators and aggregation procedures were studied in~\cite{alquier2013bayesian,suzuki2015convergence,alquier2015bayesian,alquier2017estimation,luu2017sharp,alquier2017oracle,dalalyan2018exponentially,lumbreras2018bayesian,dalalyan2018exponential,guedj2}.

To apply matrix factorization techniques to multivariate time series is a very natural idea. First, the low-rank structure induced by the factorization leads to high correlations that are indeed observed in some applications (this structure is actually at the core of cointegration models in econometrics~\cite{engle1987co,Kleibergen1,Bauwens}). Moreover, the factorization provides a decomposition of each series in a dictionary which member that can be interpreted as latent factors used for example in state-space models, see e.g. Chapter 3 in~\cite{koop2010bayesian}. For this reasons, matrix factorization was used in multivariate time series analysis beyond econometrics: electricity consumptions forecasting~\cite{de2017recovering,mei2018nonnegative}, failure detection in transports systems~\cite{tonnelier2018anomaly}, collaborative filtering~\cite{gultekin2017online}, social media analysis~\cite{saha2012learning} to name a few.

It is likely that the temporal structure in the data can be exploited to obtain an accurate and sensible factorization: autocorrelation, smoothness, periodicity... Indeed, while some authors use matrix factorization as a black box for data analysis, others propose in a way or another to adapt the algorithm to the temporal structure of the data~\cite{NIPS2016_6160,saha2012learning,cheung2015decomposing,gultekin2017online}. However, there is no theoretical guarantee that this leads to better predictions or better rates of convergence. Moreover, the aforementionned theoretical studies~\cite{candes2009exact,candes2010power,candes2010matrix,koltchinskii2011nuclear,cai2015rop,klopp2017robust,klopp2017structured,moridomi2018tighter} all assumed i.i.d noise, strongly limiting their applicability to study algorithms designed for time series such as in~\cite{NIPS2016_6160}. The objective of this paper is to address both issues.

Consider for example that one observes a $d$ series $(x_{i,t})_{t=1}^T=\mathbf{X}$ and assume that $\mathbf{X}=\mathbf{M}+\varepsilon$ where $\mathbf{M}$ is a rank $k$ matrix and $\varepsilon$ is some noise. In a first time, assume that entries $\varepsilon_{i,t}$ of $\varepsilon$ are i.i.d with variance $\sigma^2$. Theorem 3 in~\cite{koltchinskii2011nuclear} implies that there is an estimator $\hat{\mathbf{M}}_1$ of $\mathbf{M}$, such that $\frac{1}{dT}\|\hat{\mathbf{M}}_1 - \mathbf M\|_F^2 = \mathcal{O}(\sigma^2  \frac{k(d+T)}{dT}) $, up to log terms. Moreover, Theorem 5 in the same paper shows that this rate cannot be improved. Here, we propose an estimator $\hat{\mathbf{M}}=\hat{\mathbf{U}}\hat{\mathbf{V}}$, where $\hat{\mathbf{U}}$ is a $d\times k$ matrix and $\hat{\mathbf{V}}$ is $k\times T$. We study this estimator under the assumption that the rows $\varepsilon_{i,\cdot}$ of $\varepsilon$ are independent, centered, with covariance matrix $\Sigma_{\varepsilon}$, allowing a temporal dependence in the noise. We prove that $\frac{1}{dT}\|\hat{\mathbf{M}} - \mathbf M\|_F^2 = \mathcal{O}(\left\| \Sigma_{\varepsilon} \right\|_{{\rm op}} \frac{k(d+T)}{dT})$ where $\|\Sigma_\varepsilon \|_{{\rm op}}$ is the operator norm of $\Sigma_\varepsilon$. Note that in the i.i.d case $\Sigma_\varepsilon = \sigma^2 \mathbb{I}_T$, we recover the rate of~\cite{koltchinskii2011nuclear} as $\|\Sigma_\varepsilon \|_{{\rm op}} = \sigma^2$. However, our result is more general: we provide examples where the noise is non i.i.d and we still have a control on $\|\Sigma_\varepsilon \|_{{\rm op}}$. For example, when the noise is row-wise AR(1), that is $\varepsilon_{i,t+1} = \rho \varepsilon_{i,t} + \eta_{i,t}$ where the $\eta_{i,t}$ are i.i.d with variance $\sigma^2$ and $|\rho|<1$, we have $\|\Sigma_\varepsilon \|_{{\rm op}}\leqslant \sigma^2 \frac{1+|\rho|}{1-|\rho|} $.
 Moreover, our estimator can be tuned to take into account a possible periodicity or smoothness of the series. This is done by rewriting $\hat{\mathbf{W}}=\hat{\mathbf{V}}\mathbf{\Lambda}$ where $\mathbf{\Lambda}$ is a $\tau\times T$ matrix encoding the temporal structure, and $\tau \leqslant T$. In this case, we always improve on the rate $\mathcal{O}(\left\| \Sigma_{\varepsilon} \right\|_{{\rm op}} \frac{k(d+T)}{dT})$.
 
 We obtain the following rates, for some constant $C(\beta,L)$:

\bigskip

\begin{center}
\begin{tabular}{|c|c|c|c|}
 \hline
 & no structure & $\tau$-periodic case & $\beta$-smooth case
 \\
 \hline
 order of $\frac{1}{dT}\|\hat{\mathbf{M}} - \mathbf M\|_F^2$ & $ \frac{\|\Sigma_\varepsilon \|_{{\rm op}} k(d+T)}{dT}$ & $\frac{\|\Sigma_\varepsilon \|_{{\rm op}} k(d+\tau)}{dT}$ & $ 
 \frac{\|\Sigma_\varepsilon \|_{{\rm op}}kd}{dT} + \left(\frac{\|\Sigma_\varepsilon \|_{{\rm op}} k}{dT}\right)^{\frac{2\beta}{2\beta+1}} $
 \\
 \hline
\end{tabular}
\end{center}

\bigskip

All the results are first stated under a known structure, that is, we assume that we know the rank $k$, the period $\tau$ or the smoothness $\beta$ of the series. We provide at the end of the paper a model selection procedure that allows to obtain the same rates of convergence without assuming this prior knowledge.

Finally, we should mention the nice paper~\cite{richard2014link} where the authors studied time-evolving adjacency matrices for graphs with autoregressive features. However, the rows of an adjacency matrix are not interpreted as time series, so the objective of this work is quite different from ours.

The paper is organized as follows. In Section~\ref{section_notations} we introduce the notations that will be used in all the paper. In Subection~\ref{section_oracle_inequality_identity} we study matrix factorization without additional temporal structure. In Section~\ref{section_oracle_inequality_general_case}, we study the estimator $\hat{\mathbf{M}} = \hat{\mathbf{U}} \hat{\mathbf{V}} \mathbf{\Lambda}$ in the general case, and show how it improves the rates of convergence for a well chosen matrix $\mathbf{\Lambda}$ for periodic and/or smooth series. Finally, adaptation to unknown rank, periodicity and/or smoothness is tackled in Section~\ref{section_selection}. The proofs are given in Section~\ref{section_proofs}.
%


%
\section{Setting of the problem and notation}
\label{section_notations}

Assume that we observe a multivariate series
\begin{displaymath}
\mathbf X = (x_{i,t})_{(i,t)\in\llbracket 1,d\rrbracket\times\llbracket 1,T\rrbracket}.
\end{displaymath}
where $d\in\mathbb N^*$ and $T\in\mathbb N\backslash\{0,1\}$.
This multivariate series is modelled as a stochastic process. We actually assume that
\begin{equation}\label{model}
\mathbf X =\mathbf M +\varepsilon,
\end{equation}
where $\varepsilon$ is a noise and $\mathbf M$ is a matrix of rank $k\in\llbracket 1,T\rrbracket$. Then, there exist $\mathbf U\in\mathcal M_{d,k}(\mathbb R)$ and $\mathbf W\in\mathcal M_{k,T}(\mathbb R)$ such that $\mathbf M =\mathbf U\mathbf W$. We will refer to $\mathbf{W}$ as the {\it dictionary} or as the {\it latent series}.
\\
\\
We also want to model more structure in $\mathbf M$. This is done by rewritting $\mathbf{W}=\mathbf V \mathbf \Lambda$, where $\tau\in\mathbb{N}\setminus\{0\}$, $\mathbf V\in\mathcal M_{k,\tau}$ and $\mathbf \Lambda \in\mathcal M_{\tau,T}$, where $\mathbf\Lambda$ is a {\it known} matrix. The matrix $\Lambda$ depends on the structure assumed on $\mathbf{M}$.
%


%
\begin{example}[Periodic series]\label{periodic}
Assume that $T= p\tau$ with $p\in\mathbb N^*$ for the sake of simplicity. To assume that the latent series in the dictionary $\mathbf{W}$ are $\tau$-periodic is exactly equivalent to writing $
\mathbf W =\mathbf V \mathbf \Lambda
$
where $\mathbf V\in\mathcal{M}_{k,\tau}(\mathbb R)$ and
$
\mathbf\Lambda =
(\mathbf{I}_\tau |\dots |\mathbf{I}_\tau )
\in\mathcal{M}_{\tau,T}(\mathbb R)
$
is defined by blocks, $\mathbf{I}_\tau$ being the indentity matrix in $\mathcal{M}_{\tau,\tau}(\mathbb R)$.
\end{example}
%


%
\begin{example}[Smooth series]\label{smooth_function}
We can assume that the series in $\mathbf{W}$ are smooth. For example, say that they belong a a Sobolev space with smoothness $\beta$, we have
\begin{displaymath}
\mathbf W_{i,t} =
\sum_{n = 0}^{\infty}
\mathbf{U}_{i,n}\mathbf e_n\left(\frac{t}{T}\right)
\end{displaymath}
where $(\mathbf e_n)_{n\in\mathbb N}$ is the Fourier basis (the definition of a Sobolev space is reminded is Section~\ref{section_oracle_inequality_general_case} below). Of course, there are infinitely many coefficients $\mathbf{U}_{i,n}$ and to estimate them all is not feasible, however, for $\tau$ large enough, the approximation
\begin{displaymath}
\mathbf W_{i,t} \simeq
\sum_{n = 0}^{\tau-1}
\mathbf{U}_{i,n}\mathbf e_n\left(\frac{t}{T}\right)
\end{displaymath}
will be suitable, and can be rewritten as $\mathbf{W}=\mathbf{U}\mathbf{\Lambda}$ where $\mathbf\Lambda _{i,t}=e_i(t/T)$. More details will be given in Section~\ref{section_oracle_inequality_general_case}, where we actually cover more general basis of functions.
\end{example}
\noindent
So our complete model will finally be written as
\begin{equation}
\label{model1}
\mathbf X =\mathbf M +\varepsilon =
\mathbf U\mathbf V \mathbf\Lambda + \varepsilon,
\end{equation}
where $\mathbf U\in\mathcal M_{d,k}(\mathbb R)$ and $\mathbf V \in\mathcal M_{k,\tau}(\mathbb R)$ are unknown, but $\tau\leqslant T$ and $\mathbf\Lambda\in\mathcal M_{\tau,T}(\mathbb C)$ such that ${\rm rank}(\mathbf\Lambda) =\tau$ are known (note that the unstructured case corresponds to $\tau = T$ and $\mathbf\Lambda =\mathbf I_T$).
\\
\\
Note that more constraint can be imposed on the estimator. For example, in nonnegative matrix factorization~\cite{LS1999}, one imposes that all the entries in $\hat{\mathbf{U}}$ and $\hat{\mathbf{W}}$ are nonnegative. Here, we will more generally assume that $\hat{\mathbf{U}}\hat{\mathbf{V}}$ belongs to some prescribed subset $\mathcal{S}\subseteq \mathcal{M}_{d,T}(\mathbb{R}) $.
\\
\\
In what follows, we will consider two norms on $\mathcal{M}_{d,T}$. For a matrix $A$, the Frobenius norms is given by
\begin{displaymath}
\|\mathbf A\|_{F} =
\textrm{trace}(\mathbf A\mathbf A^*)^{1/2}.
\end{displaymath}
and the operator norm by
\begin{displaymath}
 \|\mathbf A\|_{\textrm{op}} =
 \sup_{\|x\|= 1}
 \|\mathbf Ax\|
 \end{displaymath}
where $\|\cdot\|$ is the Euclidean norm on $\mathbb{R}^T$.

\subsection{Estimation by empirical risk minimization}

By multiplying both sides in~\eqref{model1} by the pseudo-inverse $\mathbf\Lambda^+ =\mathbf\Lambda^*(\mathbf\Lambda\mathbf{\Lambda}^*)^{-1}$, we obtain the ``simplified model''
\begin{displaymath}
\label{simplified-model}
\widetilde{\mathbf X} =
\widetilde{\mathbf{M}} + \widetilde{\varepsilon}
\end{displaymath}
with $\widetilde{\mathbf X} = \mathbf X \mathbf \Lambda^+$, $\widetilde{\mathbf M} =\mathbf{UV}$ and $\widetilde{\varepsilon} = \varepsilon \mathbf \Lambda^+$. In this model, the estimation of $\widetilde{\mathbf{M}}$ can be done by empirical risk minimization:
\begin{equation}\label{estimator}
\widehat{\widetilde{\mathbf M}}_{\mathcal S}\in
\arg\min_{\mathbf A\in\mathcal S}\widetilde r(\mathbf A)
\end{equation}
where
\begin{displaymath}
\widetilde r(\mathbf A) =\|\mathbf A -\widetilde{\mathbf X}\|_{F}^{2}
\textrm{ $;$ }
\forall\mathbf A\in\mathcal M_{d,\tau}(\mathbb R).
\end{displaymath}
Therefore, we can define the estimator $\widehat{\mathbf M}_{\mathcal S} = \widehat{\widetilde{\mathbf M}}_{\mathcal S}\mathbf\Lambda$ of $\mathbf M$.
\\
\\
In Section~\ref{section_oracle_inequality}, we study the statistical performances of this estimator. The first step is done in Subsection~\ref{section_oracle_inequality_identity}, where we derive upper bounds on
\begin{displaymath}
\left\| \widehat{\widetilde{\mathbf M}}_{\mathcal S} -\widetilde{\mathbf M}\right\|_{F}^2.
\end{displaymath}
The corresponding upper bounds on
\begin{displaymath}
\left\| \widehat{\mathbf M}_{\mathcal S} -\mathbf M\right\|_{F}^2
\end{displaymath}
are derived in Subsection~\ref{section_oracle_inequality_general_case}.

%
\section{Oracle inequalities}
\label{section_oracle_inequality}

Throughout this section, assume that $\varepsilon$ fulfills the following..
\begin{assumption}\label{assumption_noise}
The rows of $\varepsilon$ are independent and have the same $T$-dimensional sub-Gaussian distribution, with second moment matrix $\Sigma_{\varepsilon}$. Moreover, $\varepsilon_{1,.}\Sigma_{\varepsilon}^{-1/2}$ is isotropic and has a finite sub-Gaussian norm
\begin{displaymath}
\mathfrak K_{\varepsilon} :=
\sup_{\|x\|=1}
\sup_{p\in [1,\infty[}
p^{-1/2}\mathbb E(|\langle  \varepsilon_{1,.}\Sigma_{\varepsilon}^{-1/2},x\rangle|^p)^{1/p}
<\infty.
\end{displaymath}
\end{assumption}
\noindent
In the sequel, we also consider $\widetilde{\mathfrak K}_{\varepsilon} :=\mathfrak K_{\varepsilon}^{2}\vee\mathfrak K_{\varepsilon}^{4}$.
\\
\\
We remind (see e.g Chapter 1 in~\cite{chafai2012interactions})  that when $X \sim \mathcal{N}(0,\mathbf I_n) $,
\begin{equation}
\label{sousgauss-gauss}
\sup_{\|x\|=1}
\sup_{p\in [1,\infty[}
p^{-1/2}\mathbb E(|\langle  X,x\rangle|^p)^{1/p}
=C
\end{equation}
for some universal constant $C>0$ (that is, $C$ does not depend on $n$). Thus, for Gaussian noise, Assumption~\ref{assumption_noise} is satisfied and $\mathfrak K_{\varepsilon} = C $ does not depend on the dimension $T$.


%
\subsection{The case $\mathbf\Lambda = \mathbf I_T$}\label{section_oracle_inequality_identity} In this subsection only, we assume that $\mathbf{\Lambda} = \mathbf I_T$ (and thus $\tau=T$). So the simplified model is actually the original model $
\widetilde{\mathbf{X}} = \mathbf{X}$, $\widetilde{\mathbf{M}} = \mathbf M$, $\widetilde{\mathbf{\varepsilon}} = \mathbf{\varepsilon}$ and
$\widehat{\widetilde{\mathbf M}}_{\mathcal S}=\widehat{\mathbf M}_{\mathcal S}$.

\begin{theorem}\label{oracle_inequality_main}
Under Assumption \ref{assumption_noise}, for every $\lambda\in ]0,1[$ and $s\in\mathbb R_+$,
\begin{displaymath}
\frac{1}{dT}
\left\| \widehat{\mathbf M}_{\mathcal S} - \mathbf{M} \right\|_F^2
\leqslant
\frac{1+\lambda}{1 -\lambda}\cdot
\min_{\mathbf A\in\mathcal S} \frac{1}{dT} \left\| \mathbf{A} - \mathbf{M} \right\|_F^2+
\frac{4\mathfrak c\widetilde{\mathfrak K}_{\varepsilon}\|\Sigma_{\varepsilon}\|_{\normalfont{\textrm{op}}}}{\lambda(1 -\lambda)}\cdot
\frac{k(d + T + s)}{dT}
\end{displaymath}
with probability larger than $1 - 2e^{-s}$.
\end{theorem}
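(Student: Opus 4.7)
The plan is to follow the classical route for oracle inequalities with empirical risk minimization, with the key trick being to exploit the fact that the difference between the estimator and any competitor in $\mathcal S$ has rank at most $2k$. Starting from the optimality $\|\widehat{\mathbf M}_{\mathcal S}-\mathbf X\|_F^2\leqslant \|\mathbf A-\mathbf X\|_F^2$ for every $\mathbf A\in\mathcal S$ and substituting $\mathbf X=\mathbf M+\varepsilon$, a direct expansion yields the basic inequality
\begin{displaymath}
\|\widehat{\mathbf M}_{\mathcal S}-\mathbf M\|_F^2
\leqslant
\|\mathbf A-\mathbf M\|_F^2
+2\langle \widehat{\mathbf M}_{\mathcal S}-\mathbf A,\varepsilon\rangle_F.
\end{displaymath}
Everything then reduces to controlling the stochastic cross term.

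Since elements of $\mathcal S$ are of the form $\mathbf{UV}$ with $\mathbf U\in\mathcal M_{d,k}$ and $\mathbf V\in\mathcal M_{k,T}$, the matrix $\widehat{\mathbf M}_{\mathcal S}-\mathbf A$ has rank at most $2k$. Using trace duality together with the standard inequality $\|B\|_*\leqslant \sqrt{\mathrm{rank}(B)}\,\|B\|_F$, I would bound
\begin{displaymath}
|\langle \widehat{\mathbf M}_{\mathcal S}-\mathbf A,\varepsilon\rangle_F|
\leqslant
\|\widehat{\mathbf M}_{\mathcal S}-\mathbf A\|_*\,\|\varepsilon\|_{\mathrm{op}}
\leqslant
\sqrt{2k}\,\|\widehat{\mathbf M}_{\mathcal S}-\mathbf A\|_F\,\|\varepsilon\|_{\mathrm{op}}.
\end{displaymath}
Applying the triangle inequality $\|\widehat{\mathbf M}_{\mathcal S}-\mathbf A\|_F\leqslant \|\widehat{\mathbf M}_{\mathcal S}-\mathbf M\|_F+\|\mathbf A-\mathbf M\|_F$ and then Young's inequality $2ab\leqslant \lambda a^2+b^2/\lambda$ to each of the two resulting pieces (with $a$ equal to one of the Frobenius norms and $b=\sqrt{2k}\|\varepsilon\|_{\mathrm{op}}$), I can absorb the term involving $\|\widehat{\mathbf M}_{\mathcal S}-\mathbf M\|_F^2$ on the left-hand side. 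Collecting factors gives
\begin{displaymath}
(1-\lambda)\|\widehat{\mathbf M}_{\mathcal S}-\mathbf M\|_F^2
\leqslant
(1+\lambda)\|\mathbf A-\mathbf M\|_F^2
+\frac{4k\,\|\varepsilon\|_{\mathrm{op}}^2}{\lambda},
\end{displaymath}
which, after minimising over $\mathbf A\in\mathcal S$ and dividing by $dT$, already has the exact form of the theorem provided $\|\varepsilon\|_{\mathrm{op}}^2$ can be controlled.

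The main obstacle is therefore the deviation bound
\begin{displaymath}
\|\varepsilon\|_{\mathrm{op}}^2
\leqslant \mathfrak c\,\widetilde{\mathfrak K}_\varepsilon\,\|\Sigma_\varepsilon\|_{\mathrm{op}}\,(d+T+s)
\quad\text{with probability}\geqslant 1-2e^{-s}.
\end{displaymath}
Here I would factor $\varepsilon=Z\,\Sigma_\varepsilon^{1/2}$, where $Z\in\mathcal M_{d,T}(\mathbb R)$ has independent, isotropic, sub-Gaussian rows with sub-Gaussian norm $\mathfrak K_\varepsilon$ (this is exactly what Assumption~\ref{assumption_noise} provides). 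Submultiplicativity gives $\|\varepsilon\|_{\mathrm{op}}\leqslant \|Z\|_{\mathrm{op}}\,\|\Sigma_\varepsilon\|_{\mathrm{op}}^{1/2}$, so it suffices to prove $\|Z\|_{\mathrm{op}}\leqslant C\sqrt{\widetilde{\mathfrak K}_\varepsilon}(\sqrt d+\sqrt T+\sqrt s)$ with the stated probability. This is the standard non-asymptotic operator-norm bound for sub-Gaussian random matrices and can be obtained by a classical $\varepsilon$-net argument on the unit sphere of $\mathbb R^T$ together with a Bernstein-type concentration bound for $\|Zx\|^2$ (Hanson-Wright or quadratic sub-Gaussian concentration). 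The dependence on the sub-Gaussian norm enters through $\mathfrak K_\varepsilon^2\vee\mathfrak K_\varepsilon^4=\widetilde{\mathfrak K}_\varepsilon$ because both the tail bound for $\langle Zx,y\rangle$ and the small-ball/variance estimates on $\|Zx\|^2$ are needed, and these are the only two moments produced by the Orlicz-type tail of a sub-Gaussian vector. Inserting this bound into the previous display and dividing by $dT$ yields the announced oracle inequality with probability at least $1-2e^{-s}$.
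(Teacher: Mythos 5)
Your proof is correct and follows essentially the same route as the paper's. You start from the optimality of the empirical risk minimizer to get the basic inequality with cross term $2\langle\widehat{\mathbf M}_{\mathcal S}-\mathbf A,\varepsilon\rangle_F$, bound it via trace duality and the low-rank structure by $\sqrt{2k}\,\|\widehat{\mathbf M}_{\mathcal S}-\mathbf A\|_F\,\|\varepsilon\|_{\mathrm{op}}$, absorb with Young's inequality, and then control $\|\varepsilon\|_{\mathrm{op}}^2$ by sub-Gaussian matrix concentration (a standard $\varepsilon$-net argument rather than the covariance-estimation form of Vershynin's bound the paper invokes, but these are the same underlying result and give the same $(d+T+s)\widetilde{\mathfrak K}_\varepsilon\|\Sigma_\varepsilon\|_{\mathrm{op}}$ dependence); the constants then match exactly. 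The only cosmetic difference is that the paper normalizes the cross term and applies Young's inequality twice through its Lemma~\ref{preliminary_estimates}, while you apply the triangle inequality first and then Young's to each piece — this is the same arithmetic. A minor advantage of your version is that you always work with the difference $\widehat{\mathbf M}_{\mathcal S}-\mathbf A$ of two elements of $\mathcal S$, whose rank is bounded by $2k$ by construction, whereas the paper manipulates $\widehat{\mathbf M}_{\mathcal S}-\mathbf M$ and $\mathbf A-\mathbf M$, for which the rank bound requires the model assumption $\mathrm{rank}(\mathbf M)\leqslant k$.
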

\noindent
As a consequence, if we have indeed $\mathbf{M}\in\mathcal{S}$, then with large probability,
$$ \frac{1}{dT}\left\| \widehat{\mathbf M}_{\mathcal S} - \mathbf{M} \right\|_F^2
  \leqslant
\frac{4\mathfrak c\widetilde{\mathfrak K}_{\varepsilon}\|\Sigma_{\varepsilon}\|_{\normalfont{\textrm{op}}}}{\lambda(1 -\lambda)}
\cdot\frac{k(d + T + s)}{dT}.
$$
Thus, we recover the rate  $\mathcal{O}(\|\Sigma_\varepsilon\|_{{\rm op}} \frac{k(d+T)}{dT})$ claimed in the introduction.
\begin{remark}
 Since the bound relies on the constant $\|\Sigma_\varepsilon\|_{\normalfont{\textrm{op}}}$, let us provide its value in some special cases:
\begin{enumerate}
 \item If ${\rm cov}(\varepsilon_{1,t},\varepsilon_{1,t'}) = \sigma^2 \mathbf{1}_{\{t=t'\}}$ then
 $$
 \|\Sigma_{\varepsilon}\|_{\textrm{op}} = \sigma^2.
 $$
 More generally, when $\varepsilon_{1,1},\dots,\varepsilon_{1,T}$ are uncorrelated,
 \begin{displaymath}
 \|\Sigma_{\varepsilon}\|_{\textrm{op}} =\max_{t\in\llbracket 1,T\rrbracket} {\rm var}(\varepsilon_{1,t}).
 \end{displaymath}
 \item Let $(\eta_t)_{t\in\mathbb Z}$ be a white noise of standard deviation $\sigma > 0$ and assume that there exists $\theta\in\mathbb R^*$ such that $\varepsilon_{1,t} =\eta_t -\theta\eta_{t - 1}$ for every $t\in\llbracket 1,T\rrbracket$. In other words, $(\varepsilon_{1,t})_{t=1,\dots,T}$ is the restriction of a MA(1) process to $\llbracket 1,T\rrbracket$. So,
 \begin{displaymath}
 \Sigma_{\varepsilon} =
 \sigma^2
 \begin{pmatrix}
  1 +\theta^2 & -\theta & 0 & \dots & 0\\
  -\theta & 1 +\theta^2 & 0 & \dots & 0\\
  \vdots & & \ddots & & \vdots\\
  0 & \dots & 0 & 1 +\theta^2 & -\theta\\
  0 & \dots & 0 & -\theta & 1 +\theta^2
 \end{pmatrix}
 \end{displaymath}
 and then
 \begin{displaymath}
 \|\Sigma_{\varepsilon}\|_{\textrm{op}} =
 \sigma^2 \left[ 1 +\theta^2 - 2\theta\min_{\ell \in\llbracket 1,T\rrbracket}\cos\left(\frac{\ell \pi}{1 + T}\right) \right] \leqslant \sigma^2 (1+\theta)^2.
 \end{displaymath}
 \item Let $(\eta_t)_{t\in\mathbb Z}$ be a white noise of standard deviation $\sigma > 0$ and assume that there is a $\rho$ with $|\rho|<1$ such that $\varepsilon_{1,t} = \rho \varepsilon_{1,t-1} + \eta_t$. So $(\varepsilon_{1,t})_{t=1,\dots,T}$ is the restriction of a AR(1) process to $\llbracket 1,T\rrbracket$. So,
 \begin{displaymath}
 \Sigma_{\varepsilon} =
 \sigma^2
 \begin{pmatrix}
  1 & \rho & \rho^2 & \dots & \rho^{T-1} \\
  \rho & 1 & \rho & \dots & \rho^{T-2} \\
  \vdots & & \ddots & & \vdots\\
  \rho^{T-2} & \dots & \rho & 1 & \rho \\
  \rho^{T-1} & \dots & \rho^2 & \rho & 1
 \end{pmatrix}
 = \sigma^2\left[\mathbf I_T + \sum_{t=1}^{T-1} \rho^t \left( \mathbf{J}_T^t + (\mathbf{J}_T^*)^t \right) \right] .
 \end{displaymath}
 where
 $$
 \mathbf{J}_T =  \begin{pmatrix}
  0 & 1 & 0 & \dots & 0 \\
  0 & 0 & 1 & \dots & 0 \\
  \vdots & & \ddots & \ddots & \vdots\\
  0 & \dots & 0 & 0 & 1 \\
  0 & \dots & 0 & 0 & 0
 \end{pmatrix}.
 $$
 As $\|\mathbf{J}_T\|_{\rm op}=1$, we have
 $$ \| \Sigma_{\varepsilon} \|_{\rm op} \leqslant \sigma^2 \left(1+2\sum_{t=1}^T |\rho|^t \right) \leqslant \sigma^2 \left(1+\frac{2|\rho|}{1-|\rho|} \right) = \sigma^2 \frac{1+|\rho|}{1-|\rho|}. $$
\end{enumerate}
\end{remark}
%


%
\subsection{The general case}\label{section_oracle_inequality_general_case}
Let us now come back to the general case. An application of Theorem \ref{oracle_inequality_main} to the ``simplified model''~\eqref{simplified-model} shows that for any $\lambda\in ]0,1[$ and $s\in\mathbb R_+$,
\begin{equation}\label{oracle_general}
\left\|\widehat{\widetilde{\mathbf M}}_{\mathcal S} - \widetilde{\mathbf{M}} \right\|_ F^2
\leqslant
\frac{1+\lambda}{1 -\lambda}\cdot
\min_{\mathbf A\in\mathcal S} \left\| \mathbf{A \Lambda} - \widetilde{\mathbf{M}} \right\|_ F^2 +
\frac{4\mathfrak ck}{\lambda(1 -\lambda)}
(d +\tau + s)\widetilde{\mathfrak K}_{\widetilde\varepsilon}
\|\Sigma_{\varepsilon\mathbf\Lambda^+}\|_{\normalfont{\textrm{op}}}
\end{equation}
with probability larger than $1 - 2e^{-s}$.
\\
\\
In order to obtain the desired bound on $\|\widehat{\mathbf M}_{\mathcal S} - \mathbf{M}\|_ F^2$, we must now understand the behaviour of $\|\Sigma_{\varepsilon\mathbf\Lambda^+}\|_{\normalfont{\textrm{op}}}$ and $\widetilde{\mathfrak K}_{\widetilde\varepsilon}$.
%


%
\begin{lemma}\label{lemmaSigma}
For any matrix $\mathbf C\in\mathcal M_{T,\tau}(\mathbb C)$,
\begin{displaymath}
\|\Sigma_{\varepsilon_{1,.}\mathbf C}\|_{\normalfont{\textrm{op}}}
\leqslant
\|\Sigma_{\varepsilon}\|_{\normalfont{\textrm{op}}}
\|\mathbf C^*\mathbf C\|_{\normalfont{\textrm{op}}}.
\end{displaymath}
\end{lemma}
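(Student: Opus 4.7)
The plan is straightforward, as this is essentially a standard operator-norm computation once the covariance matrix on the left-hand side is identified.

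First, I would write out $\Sigma_{\varepsilon_{1,\cdot}\mathbf C}$ explicitly. Since $\varepsilon_{1,\cdot}$ is a centered $T$-dimensional (row) random vector with second moment matrix $\mathbb{E}[\varepsilon_{1,\cdot}^* \varepsilon_{1,\cdot}] = \Sigma_\varepsilon$, and $\mathbf C \in \mathcal M_{T,\tau}(\mathbb C)$, the transformed vector $\varepsilon_{1,\cdot}\mathbf C$ is a $\tau$-dimensional row vector whose second moment matrix is
\begin{displaymath}
\Sigma_{\varepsilon_{1,\cdot}\mathbf C}
= \mathbb{E}\bigl[(\varepsilon_{1,\cdot}\mathbf C)^*(\varepsilon_{1,\cdot}\mathbf C)\bigr]
= \mathbf C^* \,\mathbb{E}[\varepsilon_{1,\cdot}^* \varepsilon_{1,\cdot}]\, \mathbf C
= \mathbf C^* \Sigma_\varepsilon \mathbf C.
\end{displaymath}

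Next, I would apply the submultiplicativity of the operator norm:
\begin{displaymath}
\|\mathbf C^* \Sigma_\varepsilon \mathbf C\|_{\mathrm{op}}
\leqslant \|\mathbf C^*\|_{\mathrm{op}}\,\|\Sigma_\varepsilon\|_{\mathrm{op}}\,\|\mathbf C\|_{\mathrm{op}}
= \|\mathbf C\|_{\mathrm{op}}^2 \,\|\Sigma_\varepsilon\|_{\mathrm{op}},
\end{displaymath}
using $\|\mathbf C^*\|_{\mathrm{op}} = \|\mathbf C\|_{\mathrm{op}}$. Finally, I would invoke the well-known $C^*$-identity $\|\mathbf C\|_{\mathrm{op}}^2 = \|\mathbf C^*\mathbf C\|_{\mathrm{op}}$, which follows from the fact that $\|\mathbf C\|_{\mathrm{op}}^2$ equals the largest eigenvalue of $\mathbf C^*\mathbf C$, i.e. its operator norm (since $\mathbf C^*\mathbf C$ is positive semidefinite). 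Combining these two steps gives the claimed bound.

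There is no real obstacle here: the only thing to be careful about is the convention for the covariance of a row vector versus a column vector, so that the identification $\Sigma_{\varepsilon_{1,\cdot}\mathbf C} = \mathbf C^* \Sigma_\varepsilon \mathbf C$ is written with the factors on the correct side. Once that is settled, the inequality is a one-line consequence of submultiplicativity plus the $C^*$-identity.
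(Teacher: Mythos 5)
Your proof is correct and follows the same key step as the paper, namely the identification $\Sigma_{\varepsilon_{1,\cdot}\mathbf C} = \mathbf C^*\Sigma_\varepsilon\mathbf C$. The only difference is cosmetic: the paper bounds $\|\mathbf C^*\Sigma_\varepsilon\mathbf C\|_{\mathrm{op}}$ by splitting the Rayleigh quotient as $\frac{x^*\mathbf C^*\Sigma_\varepsilon\mathbf Cx}{\|\mathbf Cx\|^2}\cdot\frac{\|\mathbf Cx\|^2}{\|x\|^2}$, whereas you use submultiplicativity together with the $C^*$-identity $\|\mathbf C\|_{\mathrm{op}}^2=\|\mathbf C^*\mathbf C\|_{\mathrm{op}}$; these are equivalent standard arguments (and yours sidesteps the minor $\mathbf Cx=0$ degeneracy implicit in the paper's quotient).
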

\noindent
The situation regarding $\widetilde{\mathfrak K}_{\widetilde\varepsilon} =\mathfrak K_{\widetilde\varepsilon}^{2}\vee\mathfrak K_{\widetilde\varepsilon}^{4}$ is different, we are not aware of a general simple upper bound on $\mathfrak K_{\tilde{\varepsilon}}=\mathfrak K_{\varepsilon \mathbf{\Lambda}^+}$ in terms of $\mathfrak K_{\varepsilon}$ and $ \mathbf{\Lambda}^+$. Still, there are two cases where we actually have $\mathfrak K_{\tilde{\varepsilon}}=\mathfrak K_{\varepsilon }$. Indeed, in the Gaussian case, $\mathfrak K_{\tilde{\varepsilon}}=\mathfrak K_{\varepsilon}=C$, see~\eqref{sousgauss-gauss} above. For non Gaussian noise, we have the following result.
%


%
\begin{lemma}\label{lemma_subGaussian_norm}
Assume that there is $c(\tau,T) > 0$ such that $\mathbf\Lambda\mathbf\Lambda^* = c(\tau,T)\mathbf I_{\tau}$. If $\Sigma_{\varepsilon} =\sigma^2\mathbf I_T$ with $\sigma > 0$, then $\mathfrak K_{\widetilde\varepsilon} =\mathfrak K_{\varepsilon}$.
\end{lemma}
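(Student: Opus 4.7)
My plan is to build an explicit correspondence $x\mapsto y$ between unit vectors $x\in\mathbb R^\tau$ (the test vectors of $\mathfrak K_{\widetilde\varepsilon}$) and unit vectors $y\in\mathbb R^T$ (the test vectors of $\mathfrak K_\varepsilon$) such that the two random linear forms $\langle\widetilde\varepsilon_{1,\cdot}\Sigma_{\widetilde\varepsilon}^{-1/2},x\rangle$ and $\langle\varepsilon_{1,\cdot}\Sigma_\varepsilon^{-1/2},y\rangle$ coincide pathwise, not merely in distribution. The whole proof is then algebraic, and hinges on the orthogonality-up-to-scaling assumption $\mathbf\Lambda\mathbf\Lambda^*=c(\tau,T)\mathbf I_\tau$, which will make the pullback by $\mathbf\Lambda^+$ a partial isometry.

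First I would unfold the definitions. Under the hypothesis, the Moore--Penrose pseudo-inverse simplifies to $\mathbf\Lambda^+=\mathbf\Lambda^*(\mathbf\Lambda\mathbf\Lambda^*)^{-1}=c^{-1}\mathbf\Lambda^*$, so $\widetilde\varepsilon_{1,\cdot}=c^{-1}\varepsilon_{1,\cdot}\mathbf\Lambda^*$. Using $\Sigma_\varepsilon=\sigma^2\mathbf I_T$ together with the orthogonality hypothesis, the covariance of this row vector is
$$
\Sigma_{\widetilde\varepsilon}=c^{-2}\mathbf\Lambda\Sigma_\varepsilon\mathbf\Lambda^*=\frac{\sigma^2}{c}\mathbf I_\tau,
$$
hence $\Sigma_{\widetilde\varepsilon}^{-1/2}=(\sqrt c/\sigma)\mathbf I_\tau$. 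In particular $\widetilde\varepsilon_{1,\cdot}\Sigma_{\widetilde\varepsilon}^{-1/2}$ is isotropic, so Assumption~\ref{assumption_noise} carries over to $\widetilde\varepsilon$.

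Next, for $x\in\mathbb R^\tau$ with $\|x\|=1$, I set $y:=c^{-1/2}x\mathbf\Lambda\in\mathbb R^T$. The orthogonality hypothesis gives $\|y\|^{2}=c^{-1}x\mathbf\Lambda\mathbf\Lambda^*x^*=\|x\|^2=1$, and a one-line computation gives
$$
\langle\widetilde\varepsilon_{1,\cdot}\Sigma_{\widetilde\varepsilon}^{-1/2},x\rangle
=\frac{1}{\sigma\sqrt c}\,\varepsilon_{1,\cdot}\mathbf\Lambda^*x^*
=\frac{1}{\sigma}\,\varepsilon_{1,\cdot}y^*
=\langle\varepsilon_{1,\cdot}\Sigma_\varepsilon^{-1/2},y\rangle,
$$
as equalities of random variables. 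Taking the $L^p$ norm of both sides, multiplying by $p^{-1/2}$, and taking the supremum over $\|x\|=1$ and $p\geqslant 1$ immediately yields $\mathfrak K_{\widetilde\varepsilon}\leqslant\mathfrak K_\varepsilon$, which is the direction actually invoked when Lemma~\ref{lemma_subGaussian_norm} is fed into~\eqref{oracle_general}.

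The main obstacle is the reverse inequality needed to upgrade this bound to the stated equality. The map $x\mapsto c^{-1/2}x\mathbf\Lambda$ is a linear isometric embedding, but its image is only the unit sphere of the $\tau$-dimensional row-span of $\mathbf\Lambda$; when $\tau<T$, directions $y\in\mathbb R^T$ transverse to that span are admissible in the definition of $\mathfrak K_\varepsilon$ but not reached by the correspondence, and for a generic sub-Gaussian distribution those directions could in principle realize a strictly larger Orlicz norm. In the Gaussian case~\eqref{sousgauss-gauss} rotation invariance settles the matter instantly, and more generally one would close the gap by arguing (via a density or rotation argument that exploits isotropy of $\varepsilon_{1,\cdot}\Sigma_\varepsilon^{-1/2}$) that the supremum in the definition of $\mathfrak K_\varepsilon$ is already attained on any fixed $\tau$-dimensional subspace. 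I would flag this as the delicate point, while noting that the substance used elsewhere is the $\leqslant$ direction that the algebraic identity provides directly.
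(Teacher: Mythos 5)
Your argument is the paper's argument: both compute $\Sigma_{\widetilde\varepsilon}^{-1/2}=c^{1/2}\sigma^{-1}\mathbf I_\tau$, identify the isometry $x\mapsto c^{-1/2}x\mathbf\Lambda$ on unit spheres, and observe that the two scalar products coincide pathwise, yielding $\mathfrak K_{\widetilde\varepsilon}\leqslant\mathfrak K_\varepsilon$.

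The ``delicate point'' you flag is genuine, and it is worth noting that the paper's own proof does not actually resolve it: after rewriting the supremum, the paper asserts the final equality $=\mathfrak K_\varepsilon$ without remarking that the test directions $y=x\mathbf\Lambda/\|x\mathbf\Lambda\|$ sweep out only the unit sphere of the $\tau$-dimensional row span of $\mathbf\Lambda$, whereas $\mathfrak K_\varepsilon$ is defined by a supremum over the whole unit sphere of $\mathbb R^T$. For a general isotropic sub-Gaussian distribution these can differ, so strictly speaking the paper proves $\mathfrak K_{\widetilde\varepsilon}\leqslant\mathfrak K_\varepsilon$, which is exactly what you obtain and exactly what is consumed downstream through Assumption~\ref{assumption-mathfrakK}; equality would require an additional symmetry hypothesis such as Gaussianity or rotational invariance of the law of $\varepsilon_{1,\cdot}$. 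So your treatment is, if anything, more careful than the paper's, and nothing of substance is lost by only claiming the one-sided bound.
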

\noindent
Note that the assumption on $\mathbf \Lambda$ is fullfilled by the examples covered in Subsections~\ref{periodic_time_series} and~\eqref{smooth_trend_time_series}.
\\
\\
The previous discussion legitimates the following assumption.
\begin{assumption}\label{assumption-mathfrakK}
 $\mathfrak K_{\widetilde\varepsilon} \leqslant \mathfrak K_{\varepsilon}$.
\end{assumption}
\noindent
Finally, note that
$$
\left\|\widehat{\mathbf M}_{\mathcal S} - \mathbf{M} \right\|_ F^2
 = \left\| (\widehat{ \widetilde{\mathbf M}}_{\mathcal S} - \widetilde{\mathbf{M}}) \mathbf{\Lambda} \right\|_ F^2
 \leqslant \left\| \widehat{ \widetilde{\mathbf M}}_{\mathcal S} - \widetilde{\mathbf{M}} \right\|_ F^2 \|\mathbf{\Lambda}\mathbf{\Lambda}^*\|_{\rm op}
$$
and in the same way
$$
\left\| \mathbf{A} - \widetilde{\mathbf{M}} \right\|_ F^2
 = \left\| (\mathbf{A}\mathbf{\Lambda} - \mathbf{M}) \mathbf{\Lambda}^+ \right\|_ F^2
 \leqslant \left\| \mathbf{A}\mathbf{\Lambda} - \mathbf{M} \right\|_ F^2 \|\mathbf{\Lambda}\mathbf{\Lambda}^+ \|(\mathbf{\Lambda}\mathbf{\Lambda}^*)^{-1} \|_{\rm op}.
$$
By Inequality~\eqref{oracle_general} together with Lemmas~\ref{lemmaSigma} and~\ref{lemma_subGaussian_norm}, we obtain the following result.
%


%
\begin{corollary}\label{oracle_inequality_corollary_fini}
Fix $\lambda\in ]0,1[$ and $s\in\mathbb R_+$. Under Assumption \ref{assumption_noise} and Assumption \ref{assumption-mathfrakK},
$$
\frac{1}{dT}
\left\|\widehat{\mathbf M}_{\mathcal S} - \mathbf{M} \right\|_ F^2
\leqslant 
\frac{1 +\lambda }{1 -\lambda}\cdot
\min_{\mathbf A\in\mathcal S}
\frac{1}{dT}
\left\| \mathbf{A}\mathbf{\Lambda} - \mathbf{M} \right\|_ F^2
+\frac{4\mathfrak c\widetilde{\mathfrak K}_{\varepsilon}\|\Sigma_{\varepsilon}\|_{\normalfont{\textrm{op}}}}{\lambda(1 -\lambda)}
\cdot\frac{k(d +\tau + s)}{dT}
$$
with probability larger than $1 - 2e^{-s}$.
\end{corollary}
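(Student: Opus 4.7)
The plan is to reduce the structured estimation problem to the unstructured one by applying Theorem~\ref{oracle_inequality_main} to the simplified model $\widetilde{\mathbf X} = \widetilde{\mathbf M} + \widetilde\varepsilon$, and then to transport the resulting bound back to the original parameterization through the identities $\widehat{\mathbf M}_{\mathcal S} = \widehat{\widetilde{\mathbf M}}_{\mathcal S}\mathbf\Lambda$ and $\mathbf M = \widetilde{\mathbf M}\mathbf\Lambda$. Every ingredient is already in place in the excerpt: Inequality~\eqref{oracle_general}, the two submultiplicative estimates displayed immediately before the corollary, Lemma~\ref{lemmaSigma}, and Assumption~\ref{assumption-mathfrakK}.

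First, since $\widetilde\varepsilon = \varepsilon\mathbf\Lambda^+$ is a deterministic linear transform of $\varepsilon$, its rows remain independent and sub-Gaussian with second-moment matrix $\Sigma_{\widetilde\varepsilon}$ and some sub-Gaussian norm $\mathfrak K_{\widetilde\varepsilon}$, so Assumption~\ref{assumption_noise} transfers to $\widetilde\varepsilon$. Moreover $\widetilde{\mathbf M} = \mathbf{UV}$ has rank at most $k$. Applying Theorem~\ref{oracle_inequality_main} in ambient dimension $d\times\tau$ therefore yields Inequality~\eqref{oracle_general} with probability at least $1 - 2e^{-s}$. I then chain the two displayed submultiplicative identities to pass from $\|\widehat{\widetilde{\mathbf M}}_{\mathcal S} - \widetilde{\mathbf M}\|_F^2$ to $\|\widehat{\mathbf M}_{\mathcal S} - \mathbf M\|_F^2$ (costing a factor $\|\mathbf{\Lambda\Lambda}^*\|_{\textrm{op}}$) and from $\|\mathbf A - \widetilde{\mathbf M}\|_F^2$ to $\|\mathbf A\mathbf\Lambda - \mathbf M\|_F^2$ (costing a factor $\|(\mathbf{\Lambda\Lambda}^*)^{-1}\|_{\textrm{op}}$). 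The noise statistics are handled next: Lemma~\ref{lemmaSigma}, applied with $\mathbf C = \mathbf\Lambda^+$, gives $\|\Sigma_{\widetilde\varepsilon}\|_{\textrm{op}} \leqslant \|\Sigma_\varepsilon\|_{\textrm{op}}\,\|(\mathbf{\Lambda\Lambda}^*)^{-1}\|_{\textrm{op}}$, and Assumption~\ref{assumption-mathfrakK} ensures $\widetilde{\mathfrak K}_{\widetilde\varepsilon} \leqslant \widetilde{\mathfrak K}_\varepsilon$.

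Combining the four estimates and dividing by $dT$ produces the two announced terms, up to a leftover conditioning factor $\|\mathbf{\Lambda\Lambda}^*\|_{\textrm{op}}\,\|(\mathbf{\Lambda\Lambda}^*)^{-1}\|_{\textrm{op}}$ multiplying both contributions. The main obstacle---really just a bookkeeping step---is to observe that this factor equals $1$ in the regime of interest, namely when $\mathbf{\Lambda\Lambda}^* = c(\tau,T)\mathbf I_\tau$: this is precisely the hypothesis under which Lemma~\ref{lemma_subGaussian_norm} validates Assumption~\ref{assumption-mathfrakK}, and it holds in both Examples~\ref{periodic} and~\ref{smooth_function}. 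Under this isotropy assumption the cancellation is exact and the corollary's inequality follows directly with probability at least $1 - 2e^{-s}$.
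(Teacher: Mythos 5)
Your argument is correct and follows the paper's own route essentially step by step: apply Theorem~\ref{oracle_inequality_main} to the simplified model $\widetilde{\mathbf X}=\widetilde{\mathbf M}+\widetilde\varepsilon$ to get~\eqref{oracle_general}, then transport the bound back using the two submultiplicative estimates displayed before the corollary, Lemma~\ref{lemmaSigma} (with $\mathbf C=\mathbf\Lambda^+$, so that $\|\mathbf C^*\mathbf C\|_{\rm op}=\|(\mathbf\Lambda\mathbf\Lambda^*)^{-1}\|_{\rm op}$), and Assumption~\ref{assumption-mathfrakK}. Where you go beyond the paper is worth highlighting: you correctly observe that this chain leaves a residual factor $\|\mathbf\Lambda\mathbf\Lambda^*\|_{\rm op}\,\|(\mathbf\Lambda\mathbf\Lambda^*)^{-1}\|_{\rm op}$ (the condition number of $\mathbf\Lambda\mathbf\Lambda^*$) in front of \emph{both} terms, which the paper silently drops. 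Since this factor is always $\geqslant 1$ with equality if and only if $\mathbf\Lambda\mathbf\Lambda^*$ is a positive multiple of $\mathbf I_\tau$, the stated corollary only holds verbatim under this extra isotropy hypothesis --- the very one used in Lemma~\ref{lemma_subGaussian_norm} to justify Assumption~\ref{assumption-mathfrakK}, and satisfied in both applications (Subsections~\ref{periodic_time_series} and~\ref{smooth_trend_time_series}). In fact, once $\mathbf\Lambda\mathbf\Lambda^*=c(\tau,T)\mathbf I_\tau$, one can streamline your bookkeeping: the two submultiplicative inequalities become exact equalities, $\|\mathbf B\mathbf\Lambda\|_F^2 = c(\tau,T)\|\mathbf B\|_F^2$ for any $\mathbf B\in\mathcal M_{d,\tau}$, so the conditioning factor never appears at all and only Lemma~\ref{lemmaSigma} introduces an inequality. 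Your diagnosis of the gap is accurate, and your proof is valid under the stated isotropy condition.
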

\noindent
Corollary 3.7 provides an oracle inequality: it says that our estimator provides the optimal tradeoff between a variance term in $ \|\Sigma_{\varepsilon}\|_{\normalfont{\textrm{op}}} k(d+\tau)/(dT)$, and a bias term. The bias term is the distance of $\mathbf{M}$ to its best approximation by a matrix of the form $\mathbf{A \Lambda}$. In order to explicit the rates of convergence, assumptions can be made to upper-bound the bias term. We now apply Corollary~\ref{oracle_inequality_corollary_fini} in the case of periodic time series, and then in the case of smooth time series. In each case, we explicit the bias term and the rate of convergence.
\subsection{Application: periodic time series}\label{periodic_time_series}

In the case of $\tau$-periodic time series, remind that we assumed for simplicity that there is an integer $p$ such that $\tau p = T$ and we defined
\begin{displaymath}
\mathbf\Lambda =
(\mathbf{I}_\tau|\dots|\mathbf{I}_\tau)\in\mathcal{M}_{\tau,T}(\mathbb R).
\end{displaymath}
Then
\begin{displaymath}
\mathbf\Lambda\mathbf\Lambda^* =
\frac{T}{\tau}\mathbf I_{\tau} \Rightarrow   \|\mathbf{\Lambda}\mathbf{\Lambda}^*\|_{\rm op} = \frac{T}{\tau} \text{ and } \|(\mathbf\Lambda
\mathbf\Lambda^*)^{-1}\|_{\normalfont{\textrm{op}}} =
\frac{\tau}{T}.
\end{displaymath}
Therefore, by Corollary~\ref{oracle_inequality_corollary_fini}, for every $\lambda\in ]0,1[$ and $s\in\mathbb R_+$, under Assumptions~\ref{assumption_noise} and~\ref{assumption-mathfrakK},
\begin{displaymath}
 \frac{1}{dT}  \left\|\widehat{\mathbf M}_{\mathcal S} - \mathbf{M} \right\|_ F^2
\leqslant
\frac{1+\lambda }{1 -\lambda}\cdot
\min_{\mathbf A\in\mathcal S}  \frac{1}{dT} \left\|\mathbf A \mathbf{\Lambda} - \mathbf{M} \right\|_ F^2
+\frac{4\mathfrak c\widetilde{\mathfrak K}_{\varepsilon}\|\Sigma_{\varepsilon}\|_{\normalfont{\textrm{op}}}}{\lambda(1 -\lambda)}
\cdot
\frac{k(d +\tau + s)}{dT}
\end{displaymath}
with probability larger than $1 - 2e^{-s}$.
Now, define
$$ \mathcal{S} = \{\mathbf{A}\in\mathcal{M}_{n,T}(\mathbb{R})\text{: } {\rm rank}(\mathbf{A}) \leqslant k \text{ and } \forall i,\forall t, \mathbf{A}_{i,t+\tau} = A_{i,t} \} $$ and assume that $ \mathbf{M}\in\mathcal{S}$. Then,
$$
\frac{1}{dT}\left\|\widehat{\mathbf M}_{\mathcal S} - \mathbf{M} \right\|_ F^2=
\mathcal{O}\left(\|\Sigma_\varepsilon\|_{{\rm op}} \frac{k(d+\tau)}{dT}\right)
$$
which is indeed an improvement with respect to the rate obtained without taking the periodicity into account, that is $\mathcal{O}(\|\Sigma_\varepsilon\|_{{\rm op}}  \frac{k(d+T)}{dT})$.


%
\subsection{Application: time series with smooth trend}\label{smooth_trend_time_series}
Assume we are given a dictionary of functions $(\mathbf e_n)_{|n|\leqslant N}$ for some finite $N\in\mathbb N$. This dictionary can for example be a finite subset of a basis of an Hilbert space $(\mathbf e_n)_{n\in\mathbb Z}$, like the Fourier basis or a wavelet basis.
\\
\\
Define
\begin{displaymath}
\mathbf\Lambda_N =
\left(\mathbf e_n\left(\frac{t}{T}\right)\right)_{(n,t)\in\llbracket -N,N\rrbracket\times\llbracket 1,T\rrbracket}.
\end{displaymath}
Note that $\mathbf{\Lambda}_N$ is a $\tau\times T$ matrix where $\tau=2N+1$.
\\
\\
Assume that
\begin{displaymath}
\mathbf\Lambda_N\mathbf\Lambda_N^* =
T\mathbf I_{\tau}.
\end{displaymath}
This implies that $\|(\mathbf\Lambda_N
\mathbf\Lambda_N^*)^{-1}\|_{\normalfont{\textrm{op}}} = 1/T$ and $\|\mathbf\Lambda_N
\mathbf\Lambda_N^*\|_{\normalfont{\textrm{op}}} = T$.
This can be the case for a well-chosen basis, otherwise, we can apply the Gram-Schmidt to the dictionary of functions.
\begin{example}\label{discrete_orthogonal_Hilbert_basis}
(Fourier's basis) Consider the Fourier basis $(\mathbf e_n)_{n\in\mathbb Z}$ defined by
\begin{displaymath}
\mathbf e_n(x) =
e^{2i\pi nx}\textrm{ $;$ }
\forall n\in\mathbb Z\textrm{, }
\forall x\in\mathbb R.
\end{displaymath}
On the one hand, for every $n\in\llbracket -N,N\rrbracket$ and $t\in\llbracket 1,T\rrbracket$, $|e_n(t/T)| = 1$. On the other hand, for every $m,n\in\llbracket -N,N\rrbracket$ such that $m\not= n$,
\begin{eqnarray*}
 \sum_{t = 1}^{T}
 \mathbf e_n\left(\frac{t}{T}\right)\overline{\mathbf e_m\left(\frac{t}{T}\right)} & = &
 \sum_{t = 1}^{T}e^{2i\pi (n - m)t/T}\\
 & = &
 \frac{e^{2i\pi (n - m)/T}(1 - e^{2i\pi (n - m)})}{1 - e^{2i\pi (n - m)/T}} = 0.
\end{eqnarray*}
\end{example}
\noindent
Therefore, by Corollary \ref{oracle_inequality_corollary_fini}, for every $\lambda\in ]0,1[$ and $s\in\mathbb R_+$, under Assumptions~\ref{assumption_noise} and~\ref{assumption-mathfrakK},
\begin{equation}
\label{step-coro-smooth}
  \left\|\widehat{\mathbf M}_{\mathcal S} - \mathbf{M} \right\|_ F^2
\leqslant
\frac{1+\lambda }{1 -\lambda}\cdot
\min_{\mathbf A\in\mathcal S}\left\|\mathbf A \mathbf{\Lambda}_N - \mathbf{M} \right\|_ F^2
+\frac{4\mathfrak c\widetilde{\mathfrak K}_{\varepsilon}
\|\Sigma_{\varepsilon}\|_{\normalfont{\textrm{op}}}}{\lambda(1 -\lambda)}
\cdot
\frac{k(d +(2N+1) + s)}{dT}
\end{equation}
with probability larger than $1 - 2e^{-s}$. We will now show the consequences of these results when the rows of $\mathbf{M}$ are smooth in the sense that they belong to a given Sobolev ellipsoid. In this case, we will not have a $\mathbf{A}$ such that $\left\|\mathbf A \mathbf{\Lambda} - \mathbf{M} \right\|_ F^2 = 0$, but this quantity will be small and can be controlled as a function of $N$. We introduce a few definitions.
%


%
\begin{definition}\label{Sobolev_ellipsoid}
The Sobolev ellipsoid $W(\beta,L)$ is the set of functions $f : [0,1]\rightarrow\mathbb R$ such that $f$ is $\beta - 1$ times differentiable, $f^{(\beta-1)}$ is absolutely continuous and
\begin{displaymath}
\int_{0}^{1} f^{(\beta)}(x)dx
\leqslant L^2.
\end{displaymath}
\end{definition}
\noindent
From now, we assume that $\mathbf{e}_n(x)=e^{2 i \pi n x}$ is the Fourier basis. It is well-known from Chapter 1 in~\cite{tsybNP} that any $f\in W(\beta,L)$ and $x\in[0,1]$,
\begin{displaymath}
f(x) =
\sum_{n = -\infty}^{\infty} c_{n}(f) \mathbf e_n(x)
\end{displaymath}
and that there is a (known) constant $C(\beta,L) > 0$ such that
\begin{equation}
\label{approximation}
\frac{1}{T} \sum_{t=1}^T\left[ f\left(\frac{t}{T}\right) - \sum_{n\leqslant |N|} c_{n} \mathbf e_n\left(\frac{t}{T}\right) \right]^2
\leqslant C(\beta,L) N^{-2\beta}.
\end{equation}
%


%
\begin{definition}\label{matrix_factorization_Sobolev}
 We define $\mathcal{S}(k,\beta,L)\subset \mathcal{M}_{d,T}(\mathbb{R}) $ as the set of matrices $\mathbf{M}$ such that $\mathbf{M} = \mathbf{U} \mathbf{W} $, $\mathbf{U} \in \mathcal{M}_{k,T}(\mathbb{R})$, $\mathbf{W} \in \mathcal{M}_{d,k}(\mathbb{R})$ and
\begin{enumerate}
 \item For any $i\in \llbracket 1,d\rrbracket$, $\|\mathbf{U}_{i,\cdot} \|^2 \leqslant 1 $,
 \item For any $\ell \in \llbracket 1,k\rrbracket$ and $t\in \llbracket 1,T\rrbracket$, $\mathbf{W}_{\ell,t} = f_{\ell}\left(\frac{t}{T}\right) $ for some $f_\ell \in W(\beta,L)$.
\end{enumerate}
Denote $\mathbf{V}_{N,\mathbf{W}}=\left( c_n(f_{\ell})\right)_{\ell \leqslant k, |n|\leqslant N}$.
\end{definition}
\noindent
Then~\eqref{approximation} implies
\begin{equation*}
\frac{1}{dT} \|\mathbf{M}-\mathbf{U} \mathbf{V}_{N,\mathbf{W}} \mathbf{\Lambda}_N \|_F^2
\leqslant C(\beta,L) N^{-2\beta}.
\end{equation*}
Pluging this into~\eqref{step-coro-smooth} gives
$$
\frac{1}{dT}\left\|\widehat{\mathbf M}_{{\mathcal S}(k,\beta,L)} - \mathbf{M} \right\|_ F^2
\leqslant
\frac{1+\lambda }{1 -\lambda}\cdot
C(\beta,L) N^{-2\beta}
+\frac{4\mathfrak c\widetilde{\mathfrak K}_{\varepsilon}
\|\Sigma_{\varepsilon}\|_{\normalfont{\textrm{op}}}}{\lambda(1 -\lambda)}\cdot
\frac{k(d +\tau + s)}{dT}.
$$
If $\beta$ is known, an adequate optimization with respect to $N$ gives the following result.
%


%
\begin{corollary}\label{oracle_inequality_Sobolev}
\label{corofourier}
Assume that $\mathbf{M}\in\mathcal{S}(k,\beta,L)$. Under Assumptions~\ref{assumption_noise} and~\ref{assumption-mathfrakK}, the choice $N=\lfloor (dTC(\beta,L)/(\|\Sigma_\varepsilon\|_{{\rm op}}k))^{1/(2\beta+1)} \rfloor$ ensures
\begin{displaymath}
  \frac{1}{dT}\left\|\widehat{\mathbf M}_{{\mathcal S}(k,\beta,L)} - \mathbf{M} \right\|_ F^2
\leqslant
\mathcal{C} \left[
 \|\Sigma_\varepsilon\|_{{\rm op}} \frac{kd+ s}{dT} + C(\beta,L)^{\frac{1}{2\beta+1}} \left(\|\Sigma_\varepsilon\|_{{\rm op}} \frac{k}{dT}\right)^{\frac{2\beta}{2\beta+1}}\right]
\end{displaymath}
with probability larger than $1 - 2e^{-s}$, where $\mathcal{C}>0$ is some constant depending on $\lambda$, $\mathfrak c$ and $\mathfrak K_{\varepsilon}$.
\end{corollary}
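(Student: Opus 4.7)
The plan is simply to instantiate the displayed inequality that immediately precedes the statement (itself obtained by plugging the Sobolev approximation bound into Corollary \ref{oracle_inequality_corollary_fini}) with a well-chosen value of $N$, then read off the rate. That inequality gives, with probability at least $1-2e^{-s}$ and for every integer $N\geqslant 1$,
\begin{equation*}
\frac{1}{dT}\|\widehat{\mathbf M}_{\mathcal S(k,\beta,L)}-\mathbf M\|_F^2
\leqslant \frac{1+\lambda}{1-\lambda}\,C(\beta,L)N^{-2\beta}
+\frac{4\mathfrak c\widetilde{\mathfrak K}_\varepsilon\|\Sigma_\varepsilon\|_{\mathrm{op}}}{\lambda(1-\lambda)}\cdot\frac{k(d+2N+1+s)}{dT}.
\end{equation*}
Splitting the second term into an $N$-independent part $\|\Sigma_\varepsilon\|_{\mathrm{op}}k(d+s+1)/(dT)$ and an $N$-dependent part $2\|\Sigma_\varepsilon\|_{\mathrm{op}}kN/(dT)$ (up to multiplicative constants), the problem reduces to minimizing over $N\in\mathbb N^*$ a function of the form
\begin{displaymath}
B(N) := A_1 N^{-2\beta} + A_2 N,\qquad A_1 \propto C(\beta,L),\qquad A_2 \propto \frac{\|\Sigma_\varepsilon\|_{\mathrm{op}}k}{dT}.
\end{displaymath}

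The continuous minimum of $B$ is attained at $N^\star=(2\beta A_1/A_2)^{1/(2\beta+1)}$, which differs from the prescribed choice $N=\lfloor(dTC(\beta,L)/(\|\Sigma_\varepsilon\|_{\mathrm{op}}k))^{1/(2\beta+1)}\rfloor$ only by a constant factor depending on $\beta$ and by the effect of rounding. An elementary computation gives $B(N^\star) = c_\beta A_1^{1/(2\beta+1)} A_2^{2\beta/(2\beta+1)}$, and because both branches $A_1 N^{-2\beta}$ and $A_2 N$ are monotone, a change of $N$ by a bounded multiplicative factor only changes $B(N)$ by a bounded multiplicative factor. Substituting the values of $A_1$ and $A_2$ yields
\begin{displaymath}
B(N)\ \lesssim\ C(\beta,L)^{1/(2\beta+1)}\left(\frac{\|\Sigma_\varepsilon\|_{\mathrm{op}}k}{dT}\right)^{2\beta/(2\beta+1)},
\end{displaymath}
which is precisely the second term in the statement. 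Combining this with the $N$-independent contribution $\|\Sigma_\varepsilon\|_{\mathrm{op}}(kd+ks)/(dT)$ (and absorbing the extra factor $k$ in front of $s$, the constant $\widetilde{\mathfrak K}_\varepsilon$, and all numerical factors into a single constant $\mathcal C$) gives the claimed bound.

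I expect the main obstacle to be purely bookkeeping: tracking all multiplicative constants through the two-sided rounding and verifying the edge regimes. In particular I would need to handle the degenerate case where the floor equals $0$, which can only occur when $dTC(\beta,L)<\|\Sigma_\varepsilon\|_{\mathrm{op}}k$; in that regime the oracle bound with $N=1$ already yields a bound dominated by the linear term $\|\Sigma_\varepsilon\|_{\mathrm{op}}k(d+s)/(dT)$, so it can be absorbed into $\mathcal C$. One should also check that $N\leqslant T$ (required for $\mathbf\Lambda_N$ to be well-defined as a $\tau\times T$ matrix with $\tau=2N+1\leqslant T$), but this is automatic in the non-trivial regime because $N^{2\beta+1}\asymp dT/\bigl(\|\Sigma_\varepsilon\|_{\mathrm{op}}k/C(\beta,L)\bigr)$ grows only linearly in $T$. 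Beyond this, the argument is routine algebra.
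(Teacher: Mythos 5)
Your proposal is correct and matches the paper's (implicit) argument: the paper leaves this proof as ``an adequate optimization with respect to $N$'' applied to the display immediately before the corollary, which is exactly the bias--variance tradeoff you minimize. Your treatment of the rounding and of the degenerate regime $N=0$ is actually more careful than anything the paper spells out, and the observation that $2N+1\leqslant T$ holds automatically in the relevant regime is a sensible sanity check.

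One small caveat, which is really a blemish in the paper's statement rather than in your argument: the $N$-independent part of the variance is $\|\Sigma_\varepsilon\|_{\rm op}\,k(d+1+s)/(dT)$, so the contribution in $s$ comes with a factor $k$. You write that this factor $k$ is ``absorbed into a single constant $\mathcal C$'', but $k$ is a model parameter, not a constant, so it cannot be absorbed. The paper's stated bound $\|\Sigma_\varepsilon\|_{\rm op}(kd+s)/(dT)$ should more honestly read $\|\Sigma_\varepsilon\|_{\rm op}\,k(d+s)/(dT)$; with that correction your derivation goes through with $\mathcal C$ depending only on $\lambda$, $\mathfrak c$, $\mathfrak K_\varepsilon$ (the $\beta$-dependent factors like $(2\beta)^{1/(2\beta+1)}$ arising from the optimization are uniformly bounded for $\beta\geqslant 1$, so they too can be absorbed).
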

\noindent
However, in practice, $\beta$ is not known - nor the rank $k$. This problem is tackled in the next section.
%


%
\section{Model selection}
\label{section_selection}

Assume that we have many possible matrices $\mathbf{\Lambda}_\tau$, for $\tau \in \mathcal{T} \subset \{1,\dots,T\} $ and for each $\tau$, many possible $\mathcal{S}_{\tau,k}$ for different possible ranks $k\in\mathcal K \subset\{1,\dots,d\wedge T\}$.
\\
\\
Consider $s\in\mathbb R_+$ and the penalized estimator $\widehat{\mathbf M}_s =\widehat{\mathbf M}_{\mathcal{S}_{\widehat\tau_s,\widehat k_s}}$ with
\begin{displaymath}
(\widehat\tau_s,\widehat k_s)\in
\arg\min_{(\tau,k)\in\mathcal T\times\mathcal K}
\left\{\left\| \widehat{\mathbf M}_{\mathcal{S}_{\tau,k}} - \mathbf X \right\|_F^2
+\textrm{pen}_{s+\tau+k} (\tau,k)\right\},
\end{displaymath}
where
\begin{displaymath}
\textrm{pen}_s(\tau,k) =
\frac{2\mathfrak ck}{\lambda}(d +\tau + s)\widetilde{\mathfrak K}_{\varepsilon}\|\Sigma_{\varepsilon}\|_{\normalfont{\textrm{op}}}.
\end{displaymath}
%


%
\begin{theorem}\label{model_selection}
Under Assumptions~\ref{assumption_noise} and~\ref{assumption-mathfrakK}, for every $\lambda\in ]0,1[$,
\begin{multline*}
\frac{1}{dT}\left\| \widehat{\mathbf M}_{\mathcal{S}_{\hat{\tau}_s,\hat{k}_s}} -\mathbf{M} \right\|_F^2
 \leqslant 
 \min_{
 \begin{tiny}
 \begin{array}{c}
 (\tau,k)\in\mathcal T\times\mathcal K
 \\
 \mathbf A\in\mathcal S_{\tau,k}
 \end{array}
 \end{tiny}
 }
 \Biggl\{\left(\frac{1+\lambda}{1-\lambda}\right)^2
  \frac{1}{dT}\left\| \mathbf{A}\mathbf{\Lambda}_\tau - \mathbf{M} \right\|_ F^2 
  \\
  +\frac{16\mathfrak c\widetilde{\mathfrak K}_{\varepsilon}\|\Sigma_{\varepsilon}\|_{\normalfont{\textrm{op}}}}{\lambda (1-\lambda)^2} \cdot\frac{k(d+\tau+s)}{dT}  \Biggr\}.
\end{multline*}
\newline
with probability larger than $1 - 2e^{-s}$.
\end{theorem}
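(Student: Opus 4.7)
The plan is to combine the selection property of $(\hat\tau_s,\hat k_s)$ with a uniform-in-$(\tau,k)$ control of the stochastic deviations that arise after expanding the empirical risk around $\mathbf M$. The key observation is that the $\tau+k$ offset built into the confidence parameter of $\mathrm{pen}_{s+\tau+k}(\tau,k)$ is exactly what is needed to pay for the union bound over the model grid $\mathcal T\times\mathcal K$.

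First, I would revisit the proof of Corollary~\ref{oracle_inequality_corollary_fini} (which itself goes through Theorem~\ref{oracle_inequality_main}) and apply it, for each $(\tau,k)\in\mathcal T\times\mathcal K$, at confidence level $s+\tau+k$. The internal chaining / matrix concentration step produces, on an event $\Omega_{\tau,k}(s)$ of probability at least $1-2e^{-(s+\tau+k)}$, a uniform inequality of the shape $2\langle\mathbf B-\mathbf M,\varepsilon\rangle\leqslant\lambda\|\mathbf B-\mathbf M\|_F^2 + C\,\widetilde{\mathfrak K}_\varepsilon\|\Sigma_\varepsilon\|_{\mathrm{op}}\,k(d+\tau+s+\tau+k)/\lambda$ valid for every $\mathbf B\in\mathcal S_{\tau,k}\mathbf\Lambda_\tau$. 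A union bound, using $\sum_{\tau,k\geqslant 1}e^{-\tau-k}\leqslant 1$, places us on $\Omega(s):=\bigcap_{(\tau,k)}\Omega_{\tau,k}(s)$, of probability at least $1-2e^{-s}$, on which all these inequalities hold simultaneously.

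Next, on $\Omega(s)$ the defining property of $(\hat\tau_s,\hat k_s)$, combined with the fact that $\widehat{\mathbf M}_{\mathcal S_{\tau,k}}=\widehat{\widetilde{\mathbf M}}_{\mathcal S_{\tau,k}}\mathbf\Lambda_\tau$ minimises $\mathbf B\mapsto\|\mathbf B\mathbf\Lambda_\tau-\mathbf X\|_F^2$ over $\mathbf B\in\mathcal S_{\tau,k}$ (a consequence of $\mathbf\Lambda_\tau\mathbf\Lambda_\tau^*\propto\mathbf I_\tau$ in the settings of interest), yields, for every competitor $(\tau,k)$ and every $\mathbf A\in\mathcal S_{\tau,k}$, the inequality $\|\widehat{\mathbf M}_{\mathcal S_{\hat\tau_s,\hat k_s}}-\mathbf X\|_F^2+\mathrm{pen}_{s+\hat\tau_s+\hat k_s}(\hat\tau_s,\hat k_s)\leqslant\|\mathbf A\mathbf\Lambda_\tau-\mathbf X\|_F^2+\mathrm{pen}_{s+\tau+k}(\tau,k)$. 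Expanding $\|\cdot-\mathbf X\|_F^2=\|\cdot-\mathbf M\|_F^2-2\langle\cdot-\mathbf M,\varepsilon\rangle+\|\varepsilon\|_F^2$ on both sides and cancelling $\|\varepsilon\|_F^2$, this rewrites as $\|\widehat{\mathbf M}_{\mathcal S_{\hat\tau_s,\hat k_s}}-\mathbf M\|_F^2\leqslant\|\mathbf A\mathbf\Lambda_\tau-\mathbf M\|_F^2 + 2\langle\widehat{\mathbf M}_{\mathcal S_{\hat\tau_s,\hat k_s}}-\mathbf A\mathbf\Lambda_\tau,\varepsilon\rangle+\mathrm{pen}_{s+\tau+k}(\tau,k)-\mathrm{pen}_{s+\hat\tau_s+\hat k_s}(\hat\tau_s,\hat k_s)$.

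Finally, splitting $2\langle\widehat{\mathbf M}_{\mathcal S_{\hat\tau_s,\hat k_s}}-\mathbf A\mathbf\Lambda_\tau,\varepsilon\rangle=2\langle\widehat{\mathbf M}_{\mathcal S_{\hat\tau_s,\hat k_s}}-\mathbf M,\varepsilon\rangle+2\langle\mathbf M-\mathbf A\mathbf\Lambda_\tau,\varepsilon\rangle$, the uniform bound from Step~1 applied at index $(\hat\tau_s,\hat k_s)$ handles the first summand, the remaining additive constant being exactly absorbed by $-\mathrm{pen}_{s+\hat\tau_s+\hat k_s}(\hat\tau_s,\hat k_s)$ by design of the penalty. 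The second summand is a scalar sub-Gaussian functional of $\varepsilon$ along the deterministic direction $\mathbf M-\mathbf A\mathbf\Lambda_\tau$; on $\Omega(s)$ it is bounded by $\lambda\|\mathbf A\mathbf\Lambda_\tau-\mathbf M\|_F^2$ plus a term of order $\widetilde{\mathfrak K}_\varepsilon\|\Sigma_\varepsilon\|_{\mathrm{op}}(s+\tau+k)/\lambda$, itself absorbed by $\mathrm{pen}_{s+\tau+k}(\tau,k)$ up to a multiplicative constant. Rearranging yields $(1-\lambda)\|\widehat{\mathbf M}_{\mathcal S_{\hat\tau_s,\hat k_s}}-\mathbf M\|_F^2\leqslant(1+\lambda)\|\mathbf A\mathbf\Lambda_\tau-\mathbf M\|_F^2+C\,\mathrm{pen}_{s+\tau+k}(\tau,k)$; dividing by $(1-\lambda)dT$, minimising over $\mathbf A$ and $(\tau,k)$, and tracking the constants (using $\tau+k\leqslant d+\tau+s$) produces the $16/[\lambda(1-\lambda)^2]$ prefactor and the $((1+\lambda)/(1-\lambda))^2$ in front of the bias displayed in the theorem. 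The main obstacle is Step~1: the required uniform-in-$(\tau,k)$ deviation control must be extracted from the internal mechanics of the proof of Theorem~\ref{oracle_inequality_main} rather than from its black-box statement, because the statement only controls $\|\widehat{\mathbf M}_{\mathcal S_{\tau,k}}-\mathbf M\|_F^2$ at a fixed deterministic $(\tau,k)$, whereas the selection argument needs $2\langle\widehat{\mathbf M}_{\mathcal S_{\hat\tau_s,\hat k_s}}-\mathbf M,\varepsilon\rangle$ at the data-dependent index $(\hat\tau_s,\hat k_s)$.
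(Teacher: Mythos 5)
Your strategy shares the paper's two pillars---a union bound over $(\tau,k)$ paid for by the $(\tau+k)$ offset in the penalty's confidence parameter, and exploitation of the penalised-selection inequality---but the execution is genuinely different, and this difference introduces a gap. The paper works in two stages: it first writes $r(\widehat{\mathbf M}_s)-\|\varepsilon\|_F^2$ in terms of $r(\widehat{\mathbf M}_{\mathcal S_{\tau,k}})$ via the selection rule, passes to $R(\widehat{\mathbf M}_{\mathcal S_{\tau,k}})$ through the algebraic Lemma~\ref{preliminary_estimates} (bounding the normalised scalar products $\psi_\varepsilon$ uniformly), and \emph{only then} applies the already-proved Corollary~\ref{oracle_inequality_corollary_fini} to control $R(\widehat{\mathbf M}_{\mathcal S_{\tau,k}})$ by the bias term $\min_{\mathbf A}\|\mathbf A\mathbf\Lambda_\tau-\mathbf M\|_F^2$. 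At no point does a term of the form $\langle\mathbf M-\mathbf A\mathbf\Lambda_\tau,\varepsilon\rangle_F$ appear: the comparison to a deterministic competitor $\mathbf A$ is entirely delegated to the Corollary, whose own proof handles it inside the simplified model.

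Your one-step expansion, by contrast, generates precisely that extra cross-term $2\langle\mathbf M-\mathbf A\mathbf\Lambda_\tau,\varepsilon\rangle_F$, and this is where the gap lies. You assert that ``on $\Omega(s)$ it is bounded by $\lambda\|\mathbf A\mathbf\Lambda_\tau-\mathbf M\|_F^2$ plus a term of order $\widetilde{\mathfrak K}_\varepsilon\|\Sigma_\varepsilon\|_{\rm op}(s+\tau+k)/\lambda$'', but $\Omega(s)$ as you defined it is the intersection of the events coming from the matrix-concentration inequality (Proposition~\ref{concentration_inequality_lemma} applied through Lemma~\ref{concentration_inequality}) in each $\tilde\varepsilon$-world; it says nothing about the fixed-direction inner product $\langle\mathbf M-\mathbf A\mathbf\Lambda_\tau,\varepsilon\rangle_F$. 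This is a scalar sub-Gaussian quantity that requires its \emph{own} deviation bound, and hence its own event with its own failure probability, which must be folded into the union-bound budget (doable, since you only need it for the single deterministic minimiser $(\tau^*,k^*,\mathbf A^*)$, but the bookkeeping must then be adjusted so the total still comes out at $1-2e^{-s}$). Note also that $\mathbf M-\mathbf A\mathbf\Lambda_\tau$ need not lie in the row space of $\mathbf\Lambda_\tau$, so a blunt reuse of the $\tilde\varepsilon$-world matrix concentration would see the full $T$-dimensional noise rather than the $\tau$-dimensional one; the scalar Hoeffding-type route is the right one and does give the correct order, but it must be made an explicit part of the event. Two smaller remarks: your explicit isolation of the hypothesis $\mathbf\Lambda_\tau\mathbf\Lambda_\tau^*\propto\mathbf I_\tau$ (needed so that the penalised selector indeed minimises $\mathbf B\mapsto\|\mathbf B\mathbf\Lambda_\tau-\mathbf X\|_F^2$) is welcome---the paper uses this silently in the step $\|(\mathbf\Lambda_\tau\mathbf\Lambda_\tau^*)^{-1}\|_{\rm op}\rho(\mathbf\Lambda_\tau\mathbf\Lambda_\tau^*)=1$. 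And your penultimate display $(1-\lambda)\|\widehat{\mathbf M}_s-\mathbf M\|_F^2\leqslant(1+\lambda)\|\mathbf A\mathbf\Lambda_\tau-\mathbf M\|_F^2+C\,\mathrm{pen}$ would, after dividing by $1-\lambda$, give a factor $\frac{1+\lambda}{1-\lambda}$ on the bias, not $\left(\frac{1+\lambda}{1-\lambda}\right)^2$; that is a \emph{stronger} bound than the theorem claims, so it is not an error, but the sentence claiming your rearrangement ``produces the $((1+\lambda)/(1-\lambda))^2$'' does not follow from the display you wrote---you would need to weaken it at the end to match the statement verbatim.
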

\begin{remark}
The reader might feel uncomfortable with the fact that the model selection procedure leads to $\hat{k}_s$ and $\hat{\tau}_s$ that depend on the prescribed confidence level $s$. Note that if $k=k_0$ is known, that is $\mathcal{K}=\{k_0\}$, then it is clear from the definition that $\hat{\tau}_s$ actually does not depend on $s$.
\end{remark}
\noindent
As an application, assume that $\mathbf{M}\in \mathcal{S}(k,\beta,L)$ where $k$ is known, but $\beta$ is unknown. Then the model selection procedure is feasible as it does not depend on $\beta$, and it satisfies exactly the same rate as $\hat{\mathbf{M}}_{\mathcal{S}(k,\beta,L)}$ in Corollary~\ref{corofourier}.
%


%
\bibliographystyle{plain}
\bibliography{biblio}
%


%
\section{Proofs}
\label{section_proofs}
\subsection{Additional notations}
Let us first introduce a few additional notations.
\\
\\
First, for the sake of shortness, we introduce the estimation risk $R$ and the empirical risk $r$. These notations also make clear the fact that our estimator can be seen as an empirical risk minimizer.
\begin{displaymath}
R(\mathbf A) =
\|\mathbf A -\mathbf M\|_{F}^{2}
\text{ and }
r(\mathbf A) =
\|\mathbf A -\mathbf X\|_{F}^{2}
\textrm{ $;$ }
\forall\mathbf A\in\mathcal M_{d,T}(\mathbb R).
\end{displaymath}
Let $\Delta(\mathcal S) =\{\mathbf A -\mathbf B\textrm{ $;$ }\mathbf A,\mathbf B\in\mathcal S\}$.
\\
\\
For any $\mathbf A\in\mathcal M_{d,T}(\mathbb C)$, the spectral radius of $\mathbf A$ is given by
\begin{displaymath}
\rho(\mathbf A) :=
\max\{|\lambda|\textrm{ $;$ }\lambda\in\textrm{sp}(\mathbf A)\}.
\end{displaymath}
Note that $\|\mathbf A\|_{\textrm{op}}^{2} =\rho(\mathbf A\mathbf A^*) =\rho(\mathbf A^*\mathbf A)$.
\\
\\
For any subset $\mathcal K$ of $\mathcal M_{d,T}(\mathbb C)$,
\begin{displaymath}
\textrm{rk}(\mathcal K) =
\max\{\textrm{rank}(\mathbf A)\textrm{ $;$ }\mathbf A\in\mathcal K\}
\end{displaymath}
and
\begin{displaymath}
\mathcal K^1 =\{\mathbf A\in\mathcal K :\|\mathbf A\|_{F}\leqslant 1\}.
\end{displaymath}
%


%
\subsection{Some lemmas}
Let us now state the key lemmas for the proof of our results. The first one will be used to estimate how far from the minimizer of $R$ is the minimizer of $r$.
%


%
\begin{lemma}\label{preliminary_estimates}
For any $\mathbf A\in\mathcal M_{d,T}(\mathbb R)$,
\begin{displaymath}
R(\mathbf A) - r(\mathbf A) +\|\varepsilon\|_{F}^{2} =
2\langle\varepsilon,\mathbf A -\mathbf M\rangle_{F}.
\end{displaymath}
Moreover, for every $\lambda\in ]0,1[$,
\begin{equation}\label{preliminary_estimates_1}
R(\mathbf A)
\leqslant
\frac{r(\mathbf A) -\|\varepsilon\|_{F}^{2}}{1 -\lambda} +
\frac{1}{\lambda(1 -\lambda)}
\left\langle\varepsilon,\frac{\mathbf A -\mathbf M}{\|\mathbf A -\mathbf M\|_{F}}\right\rangle_{F}^{2}
\end{equation}
and
\begin{equation}\label{preliminary_estimates_2}
r(\mathbf A) -\|\varepsilon\|_{F}^{2}
\leqslant
(1 +\lambda)R(\mathbf A) +
\frac{1}{\lambda}
\left\langle\varepsilon,\frac{\mathbf A -\mathbf M}{\|\mathbf A -\mathbf M\|_{F}}\right\rangle_{F}^{2}.
\end{equation}
\end{lemma}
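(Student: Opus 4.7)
The plan is to derive the equality by a direct expansion and then obtain both inequalities by applying Young's inequality to the resulting cross term. First I would use the decomposition $\mathbf{X} = \mathbf{M} + \varepsilon$, so that $\mathbf{A} - \mathbf{X} = (\mathbf{A} - \mathbf{M}) - \varepsilon$, and expand the squared Frobenius norm by bilinearity of $\langle \cdot,\cdot\rangle_F$: this gives $r(\mathbf{A}) = \|\mathbf{A}-\mathbf{M}\|_F^2 - 2\langle \varepsilon, \mathbf{A}-\mathbf{M}\rangle_F + \|\varepsilon\|_F^2 = R(\mathbf{A}) - 2\langle \varepsilon, \mathbf{A}-\mathbf{M}\rangle_F + \|\varepsilon\|_F^2$. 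Rearranging yields the claimed identity $R(\mathbf{A}) - r(\mathbf{A}) + \|\varepsilon\|_F^2 = 2\langle \varepsilon, \mathbf{A}-\mathbf{M}\rangle_F$.

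Next I would normalize the cross term to match the form of the right-hand side of the two inequalities. Assuming $\mathbf{A}\neq \mathbf{M}$ (the case $\mathbf{A}=\mathbf{M}$ is trivial, since both sides of the identity vanish and $R(\mathbf{A})=0$), set $u := \langle \varepsilon, (\mathbf{A}-\mathbf{M})/\|\mathbf{A}-\mathbf{M}\|_F\rangle_F$, so that $2\langle \varepsilon, \mathbf{A}-\mathbf{M}\rangle_F = 2\|\mathbf{A}-\mathbf{M}\|_F\cdot u$. Young's inequality $2ab \leqslant \lambda a^2 + b^2/\lambda$, applied with $a=\|\mathbf{A}-\mathbf{M}\|_F = R(\mathbf{A})^{1/2}$ and $b = \pm u$, delivers the two-sided bound $\bigl|2\langle \varepsilon, \mathbf{A}-\mathbf{M}\rangle_F\bigr| \leqslant \lambda R(\mathbf{A}) + u^2/\lambda$.

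Finally I would plug this estimate into the two rearranged forms of the identity. For \eqref{preliminary_estimates_1}, I rewrite the identity as $R(\mathbf{A}) = r(\mathbf{A}) - \|\varepsilon\|_F^2 + 2\langle \varepsilon, \mathbf{A}-\mathbf{M}\rangle_F$, apply the upper bound to get $R(\mathbf{A}) \leqslant r(\mathbf{A}) - \|\varepsilon\|_F^2 + \lambda R(\mathbf{A}) + u^2/\lambda$, then move $\lambda R(\mathbf{A})$ to the left-hand side and divide by $1-\lambda > 0$. For \eqref{preliminary_estimates_2}, I rewrite the identity as $r(\mathbf{A}) - \|\varepsilon\|_F^2 = R(\mathbf{A}) - 2\langle \varepsilon, \mathbf{A}-\mathbf{M}\rangle_F$ and apply the same bound to $-2\langle \varepsilon, \mathbf{A}-\mathbf{M}\rangle_F$, which immediately yields the right-hand side $(1+\lambda) R(\mathbf{A}) + u^2/\lambda$.

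I do not anticipate any genuine obstacle: the statement reduces to a polarization-style expansion together with a single application of Young's inequality. The only point requiring a line of care is the convention for the normalized inner product when $\mathbf{A}=\mathbf{M}$, which I would dispatch at the beginning as a trivial case where both inequalities collapse to $0 \leqslant (\text{nonnegative})$.
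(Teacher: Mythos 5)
Your proposal is correct and follows essentially the same route as the paper: expand $r(\mathbf A)=\|(\mathbf A-\mathbf M)-\varepsilon\|_F^2$ to get the identity, then apply Young's inequality $2ab\leqslant\lambda a^2+b^2/\lambda$ to the cross term (the paper writes this as $2ab\leqslant a^2+b^2$ after absorbing $\sqrt\lambda$ into the factors, which is the same thing). Your added remark dispatching the degenerate case $\mathbf A=\mathbf M$, where the normalized inner product is not defined, is a small but sensible point of care that the paper's proof leaves implicit.
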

\begin{proof}
Consider $\mathbf A\in\mathcal M_{d,T}(\mathbb R)$. First of all,
\begin{eqnarray*}
 R(\mathbf A) - r(\mathbf A) & = &
 \|\mathbf A -\mathbf X\|_{F}^{2} -\|\mathbf A -\mathbf M\|_{F}^{2}\\
 & = &
 \langle\mathbf X -\mathbf M,2\mathbf A -\mathbf X -\mathbf M\rangle_{F}^{2}\\
 & = &
 -\|\varepsilon\|_{F}^{2} + 2\langle\varepsilon,\mathbf A -\mathbf M\rangle_{F}.
\end{eqnarray*}
Then, for any $\lambda\in ]0,1[$,
\begin{equation}\label{preliminary_estimates_3}
R(\mathbf A) - r(\mathbf A) +
\|\varepsilon\|_{F}^{2} =
2\sqrt{\lambda R(\mathbf A)}
\left\langle\varepsilon,
\frac{\mathbf A -\mathbf M}{\sqrt{\lambda}
\cdot\|\mathbf A -\mathbf M\|_{F}}
\right\rangle_{F}.
\end{equation}
On the one hand, by Equation (\ref{preliminary_estimates_3}) together with the classic inequality $2ab\leqslant a^2 + b^2$ for every $a,b\in\mathbb R$,
\begin{displaymath}
R(\mathbf A) - r(\mathbf A) +
\|\varepsilon\|_{F}^{2}
\leqslant
\lambda R(\mathbf A) +
\frac{1}{\lambda}
\left\langle\varepsilon,
\frac{\mathbf A -\mathbf M}{\|\mathbf A -\mathbf M\|_{F}}
\right\rangle_{F}^{2}.
\end{displaymath}
So, Inequality (\ref{preliminary_estimates_1}) it true.
\\
\\
On the other hand, by Equation (\ref{preliminary_estimates_3}) together with the classic inequality $-2ab\leqslant a^2 + b^2$ for every $a,b\in\mathbb R$,
\begin{displaymath}
r(\mathbf A) - R(\mathbf A) -
\|\varepsilon\|_{F}^{2}
\leqslant
\lambda R(\mathbf A) +
\frac{1}{\lambda}
\left\langle\varepsilon,
\frac{\mathbf A -\mathbf M}{\|\mathbf A -\mathbf M\|_{F}}
\right\rangle_{F}^{2}.
\end{displaymath}
So, Inequality (\ref{preliminary_estimates_2}) it true.
\end{proof}
\noindent
In the proof of the theorems, $\mathbf{A}$ will be replaced by an estimator of $\mathbf{M}$ that will be data dependent. Thus, it is now crucial to obtain uniform bounds on the scalar product in Lemma~\ref{preliminary_estimates}. In machine learning theory, concentration inequalities are the standard tools to derive such a uniform bound, see~\cite{boucheron2013concentration} for a comprehensive introduction to concentration inequalities for independent observations, and their applications to statistics. Some inequalities for time series can be found for example in~\cite{dedecker2007weak}, and were applied to machine learning in~\cite{alquier2011sparsity}. Here, we require more specifically a concentration inequality on random matrices. Such inequalities can be found in~ \cite{tropp2012user,vershynin2010introduction}. We will actually use the following result (Theorem 5.39 and Remark 5.40.2 from~\cite{vershynin2010introduction}). As the proof can be found in~\cite{vershynin2010introduction}, we don't reproduce it here.
\begin{proposition}\label{concentration_inequality_lemma}
Under Assumption \ref{assumption_noise}, there exists a deterministic constant $\mathfrak m > 1$, not depending on $\varepsilon$, $d$ and $T$, such that for every $s\in\mathbb R_+$,
\begin{displaymath}
\left\|\frac{1}{d}\varepsilon^*\varepsilon -\Sigma_{\varepsilon}\right\|_{\normalfont{\textrm{op}}}
\leqslant
\mathfrak m
\max\left\{
\sqrt{\frac{T}{d}} +\sqrt{\frac{s}{d}};\left(\sqrt{\frac{T}{d}} +\sqrt{\frac{s}{d}}\right)^2
\right\}
\widetilde{\mathfrak K}_{\varepsilon}
\|\Sigma_{\varepsilon}\|_{\normalfont{\textrm{op}}}
\end{displaymath}
with probability larger than $1 - 2e^{-s}$, where $\widetilde{\mathfrak K}_{\varepsilon} :=\mathfrak K_{\varepsilon}^{2}\vee\mathfrak K_{\varepsilon}^{4}$.
\end{proposition}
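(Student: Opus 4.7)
The plan is to adapt the standard $\varepsilon$-net argument for the operator norm of a sample covariance matrix with sub-Gaussian rows, essentially reproducing the argument of Theorem~5.39 in~\cite{vershynin2010introduction}. The first step is a reduction to the isotropic case: write $\varepsilon = \widetilde\varepsilon\,\Sigma_{\varepsilon}^{1/2}$ so that $\widetilde\varepsilon$ has i.i.d.\ isotropic rows with sub-Gaussian norm at most $\mathfrak K_{\varepsilon}$. Then
$$
\left\|\frac{1}{d}\varepsilon^*\varepsilon-\Sigma_{\varepsilon}\right\|_{\textrm{op}} \leqslant \|\Sigma_{\varepsilon}\|_{\textrm{op}}\left\|\frac{1}{d}\widetilde\varepsilon^*\widetilde\varepsilon-\mathbf I_T\right\|_{\textrm{op}},
$$
so it suffices to control the right-hand factor. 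Using $\|B\|_{\textrm{op}}=\sup_{\|x\|=1}|\langle Bx,x\rangle|$ for symmetric $B$, I would replace this supremum by its maximum over a $1/4$-net $\mathcal N \subset S^{T-1}$ with $|\mathcal N|\leqslant 9^T$, at the cost of a universal multiplicative constant.

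For each fixed $x\in\mathcal N$, set $Z_i:=\langle\widetilde\varepsilon_{i,\cdot},x\rangle$. By isotropy and Assumption~\ref{assumption_noise}, the $Z_i$ are i.i.d., centered, with unit variance and with moments controlled by $\mathfrak K_{\varepsilon}$, so $Z_i^2-1$ is centered sub-exponential with Orlicz $\psi_1$-norm $\lesssim \mathfrak K_{\varepsilon}^2$. A Bernstein-type inequality for sums of independent centered sub-exponential variables then yields
$$
\mathbb P\!\left(\left|\frac{1}{d}\sum_{i=1}^d(Z_i^2-1)\right|>t\right)\leqslant 2\exp\!\left(-c\,d\min\!\left(\frac{t}{\widetilde{\mathfrak K}_\varepsilon},\frac{t^2}{\widetilde{\mathfrak K}_\varepsilon^2}\right)\right).
$$

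Finally, a union bound over $\mathcal N$ gives failure probability at most $2\cdot 9^T\exp(-c\,d\min(t/\widetilde{\mathfrak K}_\varepsilon,t^2/\widetilde{\mathfrak K}_\varepsilon^2))$, which is $\leqslant 2e^{-s}$ as soon as $c\,d\min(t/\widetilde{\mathfrak K}_\varepsilon,t^2/\widetilde{\mathfrak K}_\varepsilon^2)\geqslant (\log 9)T+s$. Solving this for $t$, using $\sqrt{a+b}\leqslant\sqrt a+\sqrt b$ to split the square-root regime, and then folding back the factor $\|\Sigma_{\varepsilon}\|_{\textrm{op}}$ from the reduction step produces the announced bound with some universal constant $\mathfrak m>1$.

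The main obstacle is the quantitative passage from the moment-based definition of the sub-Gaussian norm in Assumption~\ref{assumption_noise} to a usable Bernstein-type tail for the linear functionals $Z_i$: one must show that the uniform growth $\mathbb E|Z_i|^p\leqslant (\mathfrak K_{\varepsilon}\sqrt p)^p$ implies that $Z_i^2-1$ is sub-exponential with $\psi_1$-norm bounded by $C\,\widetilde{\mathfrak K}_{\varepsilon}$. This is precisely the point where the constant $\widetilde{\mathfrak K}_{\varepsilon}=\mathfrak K_{\varepsilon}^{2}\vee\mathfrak K_{\varepsilon}^{4}$ naturally appears, and it relies on the equivalences between sub-Gaussian moment conditions and the product rule $\|XY\|_{\psi_1}\leqslant\|X\|_{\psi_2}\|Y\|_{\psi_2}$ (Lemma~5.5 and Section~5.2.4 in~\cite{vershynin2010introduction}).
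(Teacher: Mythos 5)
Your sketch is correct and follows exactly the argument the paper relies on: the paper itself does not reproduce a proof, but simply cites Theorem~5.39 (isotropic case, $\varepsilon$-net plus sub-exponential Bernstein) and Remark~5.40.2 (reduction via $\varepsilon=\widetilde\varepsilon\,\Sigma_\varepsilon^{1/2}$) of Vershynin's survey, and your two steps reconstruct precisely those two ingredients. Your accounting for how $\widetilde{\mathfrak K}_\varepsilon=\mathfrak K_\varepsilon^2\vee\mathfrak K_\varepsilon^4$ enters through the $\psi_1$-norm of $Z_i^2-1$ and the two regimes of the Bernstein bound is also the correct way to make the constant explicit, so the proposal faithfully reproduces the cited proof.
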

\noindent
We are now in position to provide a uniform bound on the scalar product in Lemma~\ref{preliminary_estimates}.
\begin{lemma}\label{concentration_inequality}
Under Assumption \ref{assumption_noise}, there exists a constant $\mathfrak c > 1$, not depending on $\varepsilon$, $d$ and $T$, such that for every $s\in\mathbb R_+$ and $\mathcal K\subset\mathcal M_{d,T}(\mathbb R)$,
\begin{displaymath}
\sup_{\mathbf A\in\mathcal K^1}
\langle\varepsilon,\mathbf A\rangle_{F}^{2}
\leqslant
\mathfrak c\cdot\normalfont{\textrm{rk}}(\mathcal K^1)
(d + T + s)
\widetilde{\mathfrak K}_{\varepsilon}
\|\Sigma_\varepsilon\|_{\normalfont{\textrm{op}}}
\end{displaymath}
with probability larger than $1 - 2e^{-s}$.
\end{lemma}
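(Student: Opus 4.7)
The plan is to reduce the uniform control of $\langle\varepsilon,\mathbf A\rangle_F$ over the low-rank ball $\mathcal K^1$ to a single deviation bound on $\|\varepsilon\|_{\textrm{op}}$, and then invoke Proposition~\ref{concentration_inequality_lemma}. The key observation is matrix trace duality: for any $\mathbf A,\mathbf B\in\mathcal M_{d,T}(\mathbb R)$,
\[
|\langle \mathbf A,\mathbf B\rangle_F|=|\mathrm{tr}(\mathbf A^*\mathbf B)|\leqslant \|\mathbf A\|_*\,\|\mathbf B\|_{\textrm{op}},
\]
where $\|\cdot\|_*$ is the nuclear (Schatten-$1$) norm. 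By Cauchy--Schwarz applied to the singular values, any matrix $\mathbf A$ of rank at most $r$ satisfies $\|\mathbf A\|_*\leqslant\sqrt{r}\|\mathbf A\|_F$. Since every element of $\mathcal K^1$ has rank at most $\mathrm{rk}(\mathcal K^1)$ and Frobenius norm at most $1$, this yields the uniform bound
\[
\sup_{\mathbf A\in\mathcal K^1}\langle\varepsilon,\mathbf A\rangle_F^{2}\leqslant \mathrm{rk}(\mathcal K^1)\,\|\varepsilon\|_{\textrm{op}}^{2}=\mathrm{rk}(\mathcal K^1)\,\|\varepsilon^*\varepsilon\|_{\textrm{op}}.
\]

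\textbf{Concentration step.} Next I would control $\|\varepsilon^*\varepsilon\|_{\textrm{op}}$ by comparison with $d\,\Sigma_\varepsilon$:
\[
\|\varepsilon^*\varepsilon\|_{\textrm{op}}\leqslant d\,\|\Sigma_\varepsilon\|_{\textrm{op}}+d\bigl\|d^{-1}\varepsilon^*\varepsilon-\Sigma_\varepsilon\bigr\|_{\textrm{op}}.
\]
Proposition~\ref{concentration_inequality_lemma} bounds the second summand, on an event of probability at least $1-2e^{-s}$, by $\mathfrak m\,\widetilde{\mathfrak K}_\varepsilon\,\|\Sigma_\varepsilon\|_{\textrm{op}}\,d\,\max\{\sqrt{T/d}+\sqrt{s/d},(\sqrt{T/d}+\sqrt{s/d})^2\}$.

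\textbf{Bookkeeping.} To obtain the advertised $d+T+s$ scaling, I would use $(a+b)^2\leqslant 2(a^2+b^2)$ to get $d(\sqrt{T/d}+\sqrt{s/d})^2\leqslant 2(T+s)$, and AM--GM to get $\sqrt{dT}+\sqrt{ds}\leqslant d+(T+s)/2$; the $d\cdot\max\{\cdot,\cdot\}$ factor is therefore at most $2(d+T+s)$. Combining with the deterministic piece $d\|\Sigma_\varepsilon\|_{\textrm{op}}\leqslant (d+T+s)\|\Sigma_\varepsilon\|_{\textrm{op}}$, one gets $\|\varepsilon^*\varepsilon\|_{\textrm{op}}\leqslant \mathfrak c\,(d+T+s)\widetilde{\mathfrak K}_\varepsilon\|\Sigma_\varepsilon\|_{\textrm{op}}$ for a suitable $\mathfrak c>1$ independent of $\varepsilon,d,T$, and multiplying by $\mathrm{rk}(\mathcal K^1)$ closes the argument.

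\textbf{Main obstacle.} There is essentially none, once Proposition~\ref{concentration_inequality_lemma} is granted: the argument is trace duality plus careful manipulation of the $\max$. The only mild subtlety is that $\widetilde{\mathfrak K}_\varepsilon$ may not automatically dominate $1$, so to keep $\mathfrak c$ dimension-free one should either verify the lower bound on $\widetilde{\mathfrak K}_\varepsilon$ inherent in Assumption~\ref{assumption_noise} (isotropy of $\varepsilon_{1,\cdot}\Sigma_\varepsilon^{-1/2}$ forces $\mathfrak K_\varepsilon\geqslant C$ for a universal constant) or absorb the pure-variance piece $d\|\Sigma_\varepsilon\|_{\textrm{op}}$ through an independent additive constant before consolidating everything into a final $\mathfrak c$.
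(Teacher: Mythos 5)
Your proof is correct and takes essentially the same route as the paper's: the deterministic step (trace duality together with $\|\mathbf A\|_*\leqslant\sqrt{r}\,\|\mathbf A\|_F$, which the paper phrases via von Neumann's trace inequality plus Cauchy--Schwarz) reduces the claim to controlling $\sigma_1(\varepsilon)^2=\|\varepsilon^*\varepsilon\|_{\textrm{op}}$, and then Proposition~\ref{concentration_inequality_lemma} plus elementary bookkeeping on the $\max$ term finishes, exactly as you describe. Your remark about needing $\widetilde{\mathfrak K}_\varepsilon$ bounded away from $0$ (a consequence of isotropy at $p=2$ in Assumption~\ref{assumption_noise}) is pertinent: the paper silently absorbs the deterministic $d\,\|\Sigma_\varepsilon\|_{\textrm{op}}$ piece into the bracket $\mathfrak m(\cdots)\widetilde{\mathfrak K}_\varepsilon\|\Sigma_\varepsilon\|_{\textrm{op}}$, which is valid precisely because of that lower bound.
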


\begin{proof}
Consider a subset $\mathcal K$ of $\mathcal M_{d,T}(\mathbb R)$ and $s\in\mathbb R_+$. Let $\sigma_1(\varepsilon)\geqslant\dots\geqslant\sigma_d(\varepsilon)$ be the singular values of $\varepsilon$.
On the one hand, consider a matrix $\mathbf A\in\mathcal K^1$ with singular values $\sigma_1(\mathbf A)\geqslant\dots\geqslant\sigma_{\textrm{rk}(\mathcal K^1)}(\mathbf A)$. By Cauchy-Schwarz's inequality:
\begin{eqnarray*}
 |\langle\varepsilon,\mathbf A\rangle_{F}|
 & \leqslant &
 \sum_{i = 1}^{\textrm{rk}(\mathcal K^1)}\sigma_i(\varepsilon)\sigma_i(\mathbf A)\\
 & \leqslant &
 \left|\sum_{i = 1}^{\textrm{rk}(\mathcal K^1)}\sigma_i(\varepsilon)^2\right|^{1/2}
 \|\mathbf A\|_{F}
 \leqslant
 \textrm{rk}(\mathcal K^1)^{1/2}\sigma_1(\varepsilon).
\end{eqnarray*}
Then,
\begin{equation}\label{concentration_inequality_1}
\sup_{\mathbf A\in\mathcal K^1}
\langle\varepsilon,\mathbf A\rangle_{F}^{2}
\leqslant
\textrm{rk}(\mathcal K^1)\sigma_1(\varepsilon)^2.
\end{equation}
On the other hand, consider
\begin{displaymath}
\omega\in\left\{
\left\|\frac{1}{d}\varepsilon^*\varepsilon -\Sigma_{\varepsilon}\right\|_{\normalfont{\textrm{op}}}
\leqslant
\mathfrak m
\max\left\{
\sqrt{\frac{T}{d}} +\sqrt{\frac{s}{d}};\left(\sqrt{\frac{T}{d}} +\sqrt{\frac{s}{d}}\right)^2
\right\}\widetilde{\mathfrak K}_{\varepsilon}\|\Sigma_{\varepsilon}\|_{\normalfont{\textrm{op}}}\right\}.
\end{displaymath}
Then,
\begin{eqnarray*}
 \left|\frac{1}{d}\sigma_1(\varepsilon(\omega))^2 -\|\Sigma_{\varepsilon}\|_{\textrm{op}}\right|
 & = &
 \left|\frac{1}{d}\|\varepsilon(\omega)\|_{\textrm{op}}^{2} -\|\Sigma_{\varepsilon}\|_{\textrm{op}}\right|\\
 & = &
 \left|\frac{1}{d}\|\varepsilon(\omega)^*\varepsilon(\omega)\|_{\textrm{op}} -\|\Sigma_{\varepsilon}\|_{\textrm{op}}\right|\\
 & \leqslant &
 \left\|\frac{1}{d}
 \varepsilon(\omega)^*\varepsilon(\omega)
 -\Sigma_{\varepsilon}\right\|_{\textrm{op}}\\
 & \leqslant &
 \mathfrak m
 \left(\sqrt{\frac{T}{d}} +\sqrt{\frac{s}{d}} +
 \frac{2T}{d} +\frac{2s}{d}\right)\widetilde{\mathfrak K}_{\varepsilon}
 \|\Sigma_{\varepsilon}\|_{\normalfont{\textrm{op}}}.
\end{eqnarray*}
In particular,
\begin{eqnarray}
 \sigma_1(\varepsilon(\omega))^2
 & \leqslant &
 \mathfrak m(\sqrt{Td} +\sqrt{sd} + 2T + 2s + d)\widetilde{\mathfrak K}_{\varepsilon}
 \|\Sigma_{\varepsilon}\|_{\textrm{op}}
 \nonumber\\
 & \leqslant &
 \mathfrak m\left(2d +\frac{5}{2}T +\frac{5}{2}s\right)
 \widetilde{\mathfrak K}_{\varepsilon}
 \|\Sigma_{\varepsilon}\|_{\textrm{op}}
 \nonumber\\
 \label{concentration_inequality_2}
 & \leqslant &
 \mathfrak c
 \left(d + T + s\right)
 \widetilde{\mathfrak K}_{\varepsilon}\|\Sigma_{\varepsilon}\|_{\normalfont{\textrm{op}}}
\end{eqnarray}
with $\mathfrak c = 5\mathfrak m/2$. Therefore, by Inequalities (\ref{concentration_inequality_1}) and (\ref{concentration_inequality_2}) together with Proposition \ref{concentration_inequality_lemma},
\begin{displaymath}
\sup_{\mathbf A\in\mathcal K^1}
\langle\varepsilon,\mathbf A\rangle_{F}^{2}
\leqslant
\mathfrak c\cdot\textrm{rk}(\mathcal K^1)
(d + T + s)
\widetilde{\mathfrak K}_{\varepsilon}
\|\Sigma_{\varepsilon}\|_{\normalfont{\textrm{op}}}
\end{displaymath}
with probability larger than $1 - 2e^{-s}$.
\end{proof}
\subsection{Proof of Theorem \ref{oracle_inequality_main}}
Consider $\lambda\in ]0,1[$ and $s\in\mathbb R_+$. By applying the Inequalities (\ref{preliminary_estimates_1}) and (\ref{preliminary_estimates_2}) of Lemma \ref{preliminary_estimates} successively:
\begin{eqnarray*}
 R(\widehat{\mathbf M}_{\mathcal S})
 & \leqslant &
 \frac{r(\widehat{\mathbf M}_{\mathcal S}) -\|\varepsilon\|_{F}^{2}}{1 -\lambda} +
 \frac{1}{\lambda(1 -\lambda)}
 \left\langle\varepsilon,\frac{\widehat{\mathbf M}_{\mathcal S} -\mathbf M}{\|\widehat{\mathbf M}_{\mathcal S}
 -\mathbf M\|_{F}} \right\rangle_{F}^{2}\\
 & \leqslant &
 \frac{1 +\lambda}{1 -\lambda}\cdot
 \min_{\mathbf A\in\mathcal S}R(\mathbf A) +
 \frac{2}{\lambda(1 -\lambda)}
 \sup_{\mathbf A\in\Delta(\mathcal S)^1}
 \langle\varepsilon,\mathbf A\rangle_{F}^{2}.
\end{eqnarray*}
By Lemma \ref{concentration_inequality}:
\begin{displaymath}
R(\widehat{\mathbf M}_{\mathcal S})
\leqslant
\frac{1 +\lambda}{1 -\lambda}\cdot
\min_{\mathbf A\in\mathcal S}R(\mathbf A) +
\frac{4\mathfrak ck}{\lambda(1 -\lambda)}
(d + T + s)
\widetilde{\mathfrak K}_{\varepsilon}\|\Sigma_{\varepsilon}\|_{\textrm{op}}
\end{displaymath}
with probability larger than $1 - 2e^{-s}$.
%


%
\subsection{Proof of Lemma \ref{lemmaSigma}}
First of all,
\begin{displaymath}
\Sigma_{\varepsilon_{1,.}\mathbf C}
=\mathbb E(\mathbf C^*\varepsilon_{1,.}^{*}\varepsilon_{1,.}\mathbf C)
=\mathbf C^*\mathbb E(\varepsilon_{1,.}^{*}\varepsilon_{1,.})\mathbf C
=\mathbf C^*\Sigma_{\varepsilon}\mathbf C.
\end{displaymath}
Then, since the matrix $\Sigma_{\varepsilon\mathbf C}$ is Hermitian,
\begin{eqnarray*}
 \|\Sigma_{\varepsilon_{1,.}\mathbf C}\|_{\normalfont{\textrm{op}}}
 & = &
 \sup_{x\in\mathbb C^{\tau}\backslash\{0\}}
 \frac{\|\mathbf C^*\Sigma_{\varepsilon}\mathbf Cx\|}{\|x\|} =
 \sup_{x\in\mathbb C^{\tau}\backslash\{0\}}
 \frac{x^*\mathbf C^*\Sigma_{\varepsilon}\mathbf Cx}{\|x\|^2}\\
 & = &
 \sup_{x\in\mathbb C^{\tau}\backslash\{0\}}
 \frac{x^*\mathbf C^*
 \Sigma_{\varepsilon}
 \mathbf Cx}{\|\mathbf Cx\|^2}\times
 \frac{\|\mathbf Cx\|^2}{\|x\|^2}\\
 & \leqslant &
 \left(\sup_{y\in\mathbb C^T\backslash\{0\}}
 \frac{y^*\Sigma_\varepsilon y}{\|y\|^2}\right)\left(
 \sup_{x\in\mathbb C^{\tau}\backslash\{0\}}
 \frac{\|\mathbf Cx\|^2}{\|x\|^2}\right)
 =\|\Sigma_\varepsilon\|_{\normalfont{\textrm{op}}}
 \|\mathbf C^*\mathbf C\|_{\normalfont{\textrm{op}}}.
\end{eqnarray*}
%

%


%
\subsection{Proof of Lemma \ref{lemma_subGaussian_norm}}
Since $\widetilde\varepsilon =\varepsilon\mathbf\Lambda^+$, $\Sigma_{\varepsilon} =\sigma^2\mathbf I_T$ and $\mathbf\Lambda\mathbf\Lambda^* = c(\tau,T)\mathbf I_{\tau}$,
\begin{eqnarray*}
 \Sigma_{\widetilde\varepsilon}^{-1/2} & = &
 ((\mathbf\Lambda^+)^*\Sigma_{\varepsilon}\mathbf\Lambda^+)^{-1/2}\\
 & = &
 \sigma^{-1}((\mathbf\Lambda^+)^*\mathbf\Lambda^+)^{-1/2}\\
 & = &
 \sigma^{-1}(\mathbf\Lambda\mathbf\Lambda^*)^{1/2}\\
 & = &
 \sigma^{-1}c(\tau,T)^{1/2}\mathbf I_{\tau}.
\end{eqnarray*}
Then, for any $x$ with $\|x\|=1$,
\begin{eqnarray*}
 \langle\widetilde\varepsilon_{1,.}\Sigma_{\widetilde\varepsilon}^{-1/2},x\rangle & = &
 \sigma^{-1}c(\tau,T)^{-1/2}\langle\varepsilon_{1,.}\mathbf\Lambda^*,x\rangle\\
 & = &
 c(\tau,T)^{-1/2}\|x\mathbf\Lambda\|
 \cdot\left\langle\varepsilon_{1,.}\Sigma_{\varepsilon}^{-1/2},\frac{x\mathbf\Lambda}{\|x\mathbf\Lambda\|}\right\rangle.
\end{eqnarray*}
Moreover,
\begin{displaymath}
\|x\mathbf\Lambda\|^2 =
x\mathbf\Lambda\mathbf\Lambda^*x^* = c(\tau,T)xx^* = c(\tau,T).
\end{displaymath}
Therefore,
\begin{eqnarray*}
 \mathfrak K_{\widetilde\varepsilon} & = &
 \sup_{\|x\|=1}
 \sup_{p\in [1,\infty[}
 p^{-1/2}\mathbb E(|\langle\widetilde\varepsilon_{1,.}\Sigma_{\widetilde\varepsilon}^{-1/2},x\rangle|^p)^{1/p}\\
 & = &
 c(\tau,T)^{-1/2}\\
 & &
 \times
 \sup_{\|x\|=1}\left\{
 \|x\mathbf\Lambda\|
 \sup_{p\in [1,\infty[}
 p^{-1/2}\mathbb E\left(\left|\left\langle\varepsilon_{1,.}\Sigma_{\varepsilon}^{-1/2},\frac{x\mathbf\Lambda}{\|x\mathbf\Lambda\|}\right\rangle\right|^p\right)^{1/p}\right\}
 =\mathfrak K_{\varepsilon}
\end{eqnarray*}
and finally, $\widetilde{\mathfrak K}_{\widetilde\varepsilon} =\widetilde{\mathfrak K}_{\varepsilon}$.
%


%
\subsection{Proof of Theorem \ref{model_selection}}
For short, let us denote
$$ \widehat{\mathbf M}_s := \widehat{\mathbf M}_{{\mathcal S}_{\widehat\tau_s,\widehat k_s}} .$$
Consider $\lambda\in ]0,1[$. On the one hand, by applying the Inequalities (\ref{preliminary_estimates_1}) and (\ref{preliminary_estimates_2}) of Lemma \ref{preliminary_estimates} successively:
\begin{eqnarray}
 R(\widehat{\mathbf M}_s)
 & \leqslant &
 \frac{r(\widehat{\mathbf M}_s) -\|\varepsilon\|_{F}^{2}}{1 -\lambda} +
 \frac{1}{\lambda(1 -\lambda)}
 \left\langle\varepsilon,\frac{\widehat{\mathbf M}_s
 -\mathbf M}{\|\widehat{\mathbf M}_s -\mathbf M\|_{F}}\right\rangle_{F}^{2}
 \nonumber\\
 & = &
 \frac{1}{1 -\lambda}\cdot\min_{(\tau,k)\in\mathcal T\times\mathcal K}\{r(\widehat{\mathbf M}_{\tau,k}) +
 \textrm{pen}_{s+\tau+k}(\tau,k) -\|\varepsilon\|_{F}^{2}\}
 \nonumber\\
 & &
 +\frac{1}{1 -\lambda}\left(-\textrm{pen}_{s+\tau+k}(\widehat\tau_s,\widehat k_s) +\frac{1}{\lambda}
 \left\langle\varepsilon,\frac{\widehat{\mathbf M}_s
 -\mathbf M}{\|\widehat{\mathbf M}_s -\mathbf M\|_{F}}\right\rangle_{F}^{2}\right)
 \nonumber\\
 \label{model_selection_1}
 & \leqslant &
 \frac{1}{1 -\lambda}\cdot
 \min_{(\tau,k)\in\mathcal T\times\mathcal K}
 \{(1 +\lambda)R(\widehat{\mathbf M}_{\tau,k}) +\textrm{pen}_{s+\tau+k}(\tau,k) +\psi_{\varepsilon}(\widehat{\mathbf M}_{\tau,k})\}\\
 & &
 +\frac{1}{1 -\lambda}(-\textrm{pen}_{s+\tau+k}(\widehat\tau_s,\widehat k_s) +\psi_{\varepsilon}(\widehat{\mathbf M}_s)),
 \nonumber
\end{eqnarray}
where
\begin{displaymath}
\psi_{\varepsilon}(\mathbf A) =
\frac{1}{\lambda}
\left\langle\varepsilon,\frac{\mathbf A
-\mathbf M}{\|\mathbf A -\mathbf M\|_{F}}\right\rangle_{F}^{2}
\textrm{ $;$ }
\forall\mathbf A\in\mathcal M_{d,T}(\mathbb R).
\end{displaymath}
On the other hand, consider $(\tau,k)\in\mathcal T\times\mathcal K$. Since $\widetilde\varepsilon =\varepsilon\mathbf\Lambda_{\tau}^{+}$,
\begin{eqnarray*}
 \psi_{\varepsilon}(\widehat{\mathbf M}_{\tau,k}) & = &
 \frac{1}{\lambda}
 \left\langle\varepsilon,\frac{(\widehat{\widetilde{\mathbf M}}_{\tau,k}
 -\widetilde{\mathbf M})\mathbf\Lambda_{\tau}}{\|\widehat{\mathbf M}_{\tau,k} -\mathbf M\|_{F}}\right\rangle_{F}^{2} =
 \frac{1}{\lambda}
 \left\langle\varepsilon\mathbf\Lambda_{\tau}^{+}\mathbf\Lambda_{\tau}\mathbf\Lambda_{\tau}^{*},\frac{\widehat{\widetilde{\mathbf M}}_{\tau,k}
 -\widetilde{\mathbf M}}{\|\widehat{\mathbf M}_{\tau,k} -\mathbf M\|_{F}}\right\rangle_{F}^{2}\\
 & \leqslant &
 \frac{1}{\lambda}\cdot
 \sup_{\mathbf A\in\Delta(\mathcal S_{\tau,k})^1}
 \langle\widetilde\varepsilon,\mathbf A\mathbf\Lambda_{\tau}^{*}\rangle_{F}^{2}
 \leqslant
 \frac{1}{\lambda}\|\mathbf\Lambda_{\tau}^{*}\|_{\textrm{op}}^{2}\cdot
 \textrm{rk}(\Delta(\mathcal S_{\tau,k})^1)\cdot
 \sigma_1(\widetilde\varepsilon)^2.
\end{eqnarray*}
As in the proof of Proposition \ref{concentration_inequality}, by Lemma \ref{concentration_inequality_lemma} and since
\begin{displaymath}
\|\Sigma_{\varepsilon\mathbf\Lambda_{\tau}^{+}}\|_{\textrm{op}}
\|\mathbf\Lambda_{\tau}^{*}\|_{\textrm{op}}^{2}
\leqslant\|\Sigma_{\varepsilon}\|_{\textrm{op}}
\|(\mathbf\Lambda_{\tau}\mathbf\Lambda_{\tau}^{*})^{-1}\|_{\normalfont{\textrm{op}}}\rho(\mathbf\Lambda_{\tau}\mathbf\Lambda_{\tau}^{*})
=\|\Sigma_\varepsilon\|_{\rm op},
\end{displaymath}
with probability larger than $1 - 2e^{-u}$,
\begin{displaymath}
\psi_{\varepsilon}(\widehat{\mathbf M}_{\tau,k})
\leqslant
\frac{2\mathfrak ck}{\lambda}(d +\tau + u)\widetilde{\mathfrak K}_{\varepsilon}\|\Sigma_\varepsilon\|_{\rm op}
 =\textrm{pen}_u(\tau,k).
\end{displaymath}
Take $u=s+\tau+k$, we obtain that with probability at least $1-2e^{-s-\tau-k}$,
\begin{displaymath}
\psi_{\varepsilon}(\widehat{\mathbf M}_{\tau,k})
\leqslant
\frac{2\mathfrak ck}{\lambda}(d +\tau + (s+2\tau+2k))
\widetilde{\mathfrak K}_{\varepsilon}
\|\Sigma_\varepsilon\|_{\rm op}
 =\textrm{pen}_{s+\tau+k}(\tau,k).
\end{displaymath}
Then, by a union bound,
\begin{eqnarray*}
 \mathbb P(\forall k,\forall \tau\text{: }\psi_{\varepsilon}(\widehat{\mathbf M}_{\mathcal{S}_{\tau,k}})\leqslant
 \textrm{pen}_{s+2\tau+2k}(\tau,k))
 & \geqslant &
 1 - 2 \sum_{(\tau,k)\in\mathcal{T}\times \mathcal{K}} e^{-s-\tau-k} \\
  & \geqslant &
 1 - 2 e^{-s}  \left(\sum_{\tau\geq 1} e^{-\tau} \right) \left(\sum_{k\geq 1} e^{-k} \right) 
 \\
   & \geqslant &
   1 - 2 e^{-s}.
\end{eqnarray*}
Together with Inequality (\ref{model_selection_1}), this gives, with probability at least $1-2e^{-s}$,
\begin{equation}
\label{steppp}
 R(\widehat{\mathbf M}_s)
 \leqslant  \frac{1}{1 -\lambda}\cdot
 \min_{(\tau,k)\in\mathcal T\times\mathcal K}
 \left\{(1 +\lambda)R(\widehat{\mathbf M}_{\tau,k}) +2 \textrm{pen}_{s+\tau+k}(\tau,k)  \right\}.
\end{equation}
Finally, follow the proof of Corollary~\ref{oracle_inequality_corollary_fini} to obtain, on the same event with probability at least $1-2e^{-s}$, for any $\tau$ and $k$,
$$
 R(\widehat{\mathbf M}_{\mathcal{S}_{\tau,k}})
   \leqslant 
  \frac{1 +\lambda }{1 -\lambda}\cdot
  \min_{\mathbf A\in\mathcal S_{\tau,k}}
  \left\| \mathbf{A}\mathbf{\Lambda}_\tau - \mathbf{M} \right\|_ F^2
  +\frac{2}{1 -\lambda}\textrm{pen}_{s+\tau+k}(\tau,k).
$$
Plugging this into~\eqref{steppp} gives, with probability at least $1-2e^{-s}$,
\begin{equation*}
 R(\widehat{\mathbf M}_s)
 \leqslant 
 \min_{(\tau,k)\in\mathcal T\times\mathcal K}
 \left\{\left(\frac{1+\lambda}{1-\lambda}\right)^2\min_{\mathbf A\in\mathcal S_{\tau,k}}
  \left\| \mathbf{A}\mathbf{\Lambda}_\tau - \mathbf{M} \right\|_ F^2 +\frac{4}{(1-\lambda)^2} \textrm{pen}_{s+\tau+k}(\tau,k)  \right\}.
\end{equation*}
Finally, note that $k\leqslant d$ so
\begin{align*}
\textrm{pen}_{s+\tau+k}(\tau,k)
& = \frac{2\mathfrak ck}{\lambda}(d +\tau + (s+\tau+k))\widetilde{\mathfrak K}_{\varepsilon}\|\Sigma_\varepsilon\|_{\rm op}
\\
& \leqslant \frac{4\mathfrak ck}{\lambda}(d +\tau + s)\widetilde{\mathfrak K}_{\varepsilon}\|\Sigma_\varepsilon\|_{\rm op}
\end{align*}
and so, with probability at least $1-2e^{-s}$,
\begin{small}
\begin{displaymath}
R(\widehat{\mathbf M}_s)
\leqslant
\min_{(\tau,k)\in\mathcal T\times\mathcal K}
\left\{\left(\frac{1+\lambda}{1-\lambda}\right)^2\min_{\mathbf A\in\mathcal S_{\tau,k}}
\left\| \mathbf{A}\mathbf{\Lambda}_\tau - \mathbf{M} \right\|_ F^2 +\frac{16\mathfrak ck}{\lambda (1 -\lambda)^2}(d+\tau+s)
\widetilde{\mathfrak K}_{\varepsilon}\|\Sigma_{\varepsilon}\|_{\rm op}\right\}.
\end{displaymath}
\end{small}
\newline
This ends the proof.
\end{document}